\newtheorem{theorem}{Theorem}[section]
\newtheorem{lemma}[theorem]{Lemma}
\newtheorem{corollary}[theorem]{Corollary}
\newtheorem{proposition}[theorem]{Proposition}
\theoremstyle{remark}
\theoremstyle{definition}
\numberwithin{equation}{section}
\DeclareMathOperator{\Fdb}{{\mathbb F}}
\newcommand{\acts}{\curvearrowright}
\DeclareMathOperator{\cS}{{\mathcal S}}
\DeclareMathOperator{\Cdb}{{\mathbb C}}
\DeclareMathOperator{\Rdb}{{\mathbb R}}
\DeclareMathOperator{\Zdb}{{\mathbb Z}}
\DeclareMathOperator{\Hdb}{{\mathbb H}}
\DeclareMathOperator{\Tdb}{{\mathbb T}}
\DeclareMathOperator{\Ndb}{{\mathbb N}}
\begin{document}

\title[Real operator spaces and algebras]{Real structure in operator spaces,  injective envelopes and $G$-spaces}
\author{David P. Blecher}
\address{Department of Mathematics, University of Houston, Houston, TX
77204-3008, USA}
\email{dpbleche@central.uh.edu}

\author{Arianna Cecco}
\address{Department of Mathematics, University of Houston, Houston, TX
77204-3008, USA}
\email{ahcecco@central.uh.edu}

\author{Mehrdad Kalantar}
\address{Department of Mathematics, University of Houston, Houston, TX
77204-3008, USA}
\email{kalantar@math.uh.edu} 

\date{3/29/2023} 
\thanks{MK and AC are  supported by the NSF Grant DMS-2155162.  DB  is supported by a Simons Foundation Collaboration Grant.}
\subjclass[2020]{Primary  46L07,  47L05, 47L25, 47L30; Secondary: 37A55, 46L55, 46M10, 47L75}
\keywords{Operator space, operator algebra, real operator space, group action, injective envelope, complexification}

\begin{abstract}  We  present some more foundations for a theory of real structure in 
operator spaces and algebras, in particular concerning  the real case of the theory of injectivity, and the injective, ternary, and $C^*$-envelope.
We  consider the interaction between these topics and the complexification.   
We also generalize many of these results to the setting of operator spaces and systems  acted upon by a group. 
  \end{abstract}
  
\maketitle
\section{Introduction}

Ruan initiated the study of real operator spaces in \cite{ROnr,RComp}, and this study was continued in 
\cite{Sharma} and \cite{BT}.     Recently there has been an increased interest in this topic, and in  its links to quantum physics (see e.g.\ \cite{Ch1, Ch2} and references therein).   Indeed real structure occurs naturally in very many areas of mathematics, as is mentioned also for example in the first
paragraphs of \cite{BT} and \cite{Sharma}, or in \cite{Ros}.  
In the present paper we examine some aspects of 
real  and complex structure in operator spaces, establishing the real case of several important aspects 
of the theory of injectivity, and  the injective, ternary, 
and $C^*$-envelope, advancing on Sharma's work in \cite{Sharma}.  We also 
discuss the interaction between these topics and the complexification, and establish some natural relations between injectivity in the real and the complex categories. 
In fact, we undertake much of this study in a more general setting where the given operator space $X$ is equipped with a completely isometric action of a discrete group $G$. Then the results in the usual setting of operator spaces follow from the case where the group $G$ is trivial. 
Recently, the theory of equivariant injective envelopes has found very many striking applications in various problems concerning the structure theory of group $C^*$-algebras 
(see e.g.\ \cite{KK,BKKO,Bry,KSg,KS,KKLRU} and references therein). 
In the light of this, and considering the recent emergence of  interest in real $C^*$-algebras alluded to above, it is natural to investigate $G$-spaces in the real case.

Turning to the structure of our paper, in Section 2 we review the basic theory of 
operator space complexifications, including a short proof of Ruan's uniqueness of a reasonable complexification.
In  Section \ref{chcvn} we give a characterization of commutative real $W^*$-algebras in the most general case.  (We stated this result in \cite{BT}, and it will be used in 
\cite{BReal} to prove a couple of  nice facts about real operator spaces and algebras.) 
We begin   Section \ref{Coen} gently by  considering the relations between  the complexification and  injective  
and $C^*$-envelopes. 
As pointed out in \cite{BWinv} many aspects of the study of real operator spaces or algebras are equivalent to the study of real structure in complex spaces or algebras, in particular 
the features of  conjugate linear completely isometric period 2 automorphisms 
on complex operator spaces or algebras.

  We continue   Section \ref{Coen}   by considering 
real and complex operator spaces and  systems with a $G$-action by a discrete group $G$, and 
generalize much of our earlier theory (which in some sense is the case that $G = \Zdb_2$) to this setting.   
In particular we study  the $G$-$C^*$-envelope and $G$-injective envelope, following Hamana's work 
in the complex case \cite{Hamiecds, Hamiods}.    The second author has undertaken a very systematic investigation of injective  
 envelopes in various categories in  \cite{Cecco,CeccoTh}, and part of this (and several  of  the preliminary results needed for this) is established in this section. 
 For example we prove that the real and complex $G$-injective envelopes of a complex $G$-operator space coincide, and that the $G$-injective envelope of the 
 complexification of a real $G$-operator space $X$ is canonically identified with the complexification of  the $G$-injective envelope of $X$. We also
  prove similar facts for $G$-$C^*$ and $G$-ternary envelopes.

 We remark that Hamana works in a great generality in e.g.\ \cite{Hamiods} that includes all locally compact groups, and indeed Hopf von Neumann algebras, which 
are more general still.   Most of our results in Section 
 \ref{Coen} 
 seem to be true in the real case of Hamana's hugely general setting, by appropriate modification of the same basic proof ideas, and 
using (the real case of) the matching facts 
in \cite{Hamiods}.   However for simplicity we will stick to discrete groups since  the idea of the proofs would be essentially unchanged in the general case.  
An exception to this remark is our results on finite groups.  These seem to be easily generalizable to actions by a compact group, but seem to us to be  difficult to generalize much further than this (see Remark 
below Theorem \ref{Ginj}).   
In Section \ref{fe} we discuss  extending real structure on an algebra to containing von Neumann algebras. 

The reader will need to be familiar with the  basics of operator spaces and von Neumann algebras  
as may be found in early chapters of \cite{BLM,ER,Pau, Pisbk}, and e.g.\  \cite{P}.  
With familiarity with the very basics the reader  will have no problems following Sections 2, \ref{chcvn}, and \ref{fe}. 
Section \ref{Coen} will assume more familiarity with (the usual complex) operator space
theory.  For the injective envelope,  $C^*$-envelope,  and ternary envelope in the complex case we refer to 
  \cite[Sections 4.2--4.4]{BLM}, \cite[Section 6.2]{ER}, \cite[Chapter 15]{Pau}, or the 
papers of Hamana and Ruan referenced there.  Then  \cite{Good, Li,ARU} are texts on 
the theory of real $C^*$-algebras and real $W^*$-algebras.   We recall that a real $C^*$-algebra is a closed real $*$-subalgebra of $B(H)$ for a real Hilbert space $H$.
Abstractly it is a  real Banach $*$-algebra whose  $*$-algebra complexification has a $C^*$-norm making it a complex $C^*$-algebra.
Equivalently, it  is a  real Banach $*$-algebra  satisfying the $C^*$-identity whose selfadjoint elements have real spectrum (or alternatively, 
with $a^* a$ having positive spectrum, or $1+a^*a$ being invertible,  for $a \in A$).   See \cite[Chapter 5]{Li}. 
A space $Z$ in a category will be called {\em injective} if whenever $E \subset F$ as subobjects in the category then any morphism (e.g.\ real linear complete contraction
in the category of real operator spaces) $T : E \to Z$ has  an extending morphism $\tilde{T} : F \to Z$.  E.g.\ $B(H)$ is injective
in the category of real operator spaces, if $H$ is a real Hilbert space \cite{ROnr}. 
 A preliminary study of injective spaces, the injective envelope $I(X)$ and $C^*$-envelope $C^*_e(X)$ in the real case may be found in \cite{Sharma}.   
  Indeed we will 
  be using selected results and notation from the
existing theory of real operator spaces \cite{ROnr,RComp,Sharma,BT}.  Section 
\ref{Coen} will also use basic ideas from Hamana's theory of 
$G$-injective envelopes of $G$-spaces from \cite{Hamiecds}.  

 The letters $H, K$ are reserved for real Hilbert spaces.
 We sometimes write the complex number $i$ as $\iota$ to avoid confusion with matrix subscripting. 
 For us a {\em projection}  in a $*$-algebra 
is always an orthogonal projection (so $p = p^2 = p^*$).    A  normed algebra $A$  is {\em unital} if it has an identity $1$ of norm $1$, 
and a map $T$ 
is unital if $T(1) = 1$.  We say that $A$ is {\em approximately unital} if it has a contractive approximate identity (cai).
 We write $X_+$ for the positive operators (in the usual sense) that happen to
belong to $X$.    If $X$ is a subspace of a (real or complex) $C^*$-algebra $B$ then we write $C^*(X)$ or $C^*_B(X)$ for the
$C^*$-subalgebra of $B$ generated by $X$.  

If $T : X \to Y$ we write $T^{(n)}$ for the canonical `entrywise' amplification taking $M_n(X)$ to $M_n(Y)$.   
The completely bounded norm is $\| T \|_{\rm cb} = \sup_n \, \| T^{(n)} \|$, and $T$ is 
completely  contractive if  $\| T \|_{\rm cb}  \leq 1$. 
A map $T$ is said to be {\em positive} if it takes  positive elements to positive elements, and  {\em 
completely positive} if $T^{(n)}$ is  positive for all $n \in \Ndb$. A UCP map is  unital and completely positive.

A real operator space may either be viewed as a real subspace of $B(H)$ for a real Hilbert space $H$, or abstractly as 
a vector space with a norm $\| \cdot \|_n$ on $M_n(X)$ for each $n \in \Ndb$ satisfying  the conditions of
Ruan's characterization  \cite{ROnr}.    Sometimes the sequence of norms $(\| \cdot \|_n)$ is
called the {\em operator space structure}.  All spaces in the present paper are such operator spaces at the least, but often have more structure.
Then $X_c = X + i X$, this is the complexification that will be discussed in detail in Section 2. 
If $T : X \to Y$ then we write $T_c$  for the complexified map $x + i y \mapsto T(x) + i T(y)$ for $x, y \in X$.

A {\em unital operator space} (resp.\ {\em real operator system}) is a real subspace  (resp.\  selfadjoint subspace) of $B(H)$ for a real Hilbert space $H$,
containing $I_H$.  The  basics of the theory of real operator systems are much the same as in the complex case (see  \cite{RComp,ROnr, Sharma,BT,BReal}, most results
being proved in the same way as the complex case, or simply following from that case by complexification.  One does need to be careful
about a couple of issues though.  Arguments involving states or positive or selfadjoint  elements
often do not work in the real case, as pointed out e.g.\ in \cite{BT}.  
Indeed  there may be not enough, or there may be too many, selfadjoint elements.  
It is shown however in \cite[Lemma 2.3]{BT} that a completely positive map on a real operator system is selfadjoint (i.e.\ $T(x)^* = T(x^*)$) 
and $T_c$ is completely positive.  Also a unital map between real operator systems is completely positive  if and only if it is completely 
contractive.  The one direction of this follows by the just cited lemma.  Conversely
if $T$ is unital and and completely 
contractive then so is $T_c$, so by the complex theory $T_c$ (and hence $T$) is UCP.
A unital completely 
contractive map between real unital operator spaces of course extends to a completely completely 
contractive, hence UCP map on any containing operator system.    A homomorphism between real $C^*$-algebras is contractive if and only if it is a $*$-homomorphism,
and in this case it  is  completely positive and completely contractive \cite[Theorem 2.6]{BT}. 
We remark that  \cite{BReal} includes a systematic discussion of the real variants 
of results in  Chapters 1--4 and 8 of \cite{BLM} (results which are not in the present paper); and checks the behaviour of the complexification 
of very many  standard constructions in the theory, etc. 

An injective envelope $I(X)$ of $X$ in any of our categories is a pair $(E,j)$ consisting of an space $E$ which is injective  in our category
 and completely isometric morphism $j : X \to E$
which is {\em rigid}: that is, $I_E$ is the only completely contractive morphism $u : E \to E$ extending the identity map on $j(X)$.  We will not use 
this but the  injective envelope may also be 
characterized in terms of the `essential' or `envelope' property as in e.g.\ \cite[Lemma 4.2.4]{BLM}.  We sometimes write $I(X)$ as $I_{\Fdb}(X)$, where $\Fdb = \Rdb$ or $\Cdb$ if it is important to distinguish between the real and complex case.  Often we do not do this though,  and leave it to the reader to distinguish what is meant  from the context. 

The injective envelope may be given the structure of a {\em ternary system}, or TRO for short.    We mention  the  definition and the real versions of some basic results on TRO's from e.g.\ \cite{BLM} that we will need 
 in Section 4.    Much of this is already in the real $JB$-triple literature or in e.g.\ \cite{Sharma}.
A  real TRO  is a closed linear subspace $Z \subset B(K, H),$ for real Hilbert spaces $K$ and $H$ , satisfying $Z Z^* Z \subset Z$ .  A {\em ternary  morphism} between TROs is a linear map satisfying $T (x y^* z) = T (x) T( y)^* T(z)$.
If $Z \subset B_{\Rdb}(K,H)$ is a real TRO then (the closures of) 
$Z^* Z$ and $Z Z^*$ are real $C^*$-subalgebras of $B(K)$ and $B(H)$ respectively. 
As in \cite[p.\ 1052]{RComp} we have identifications 
$$Z_c = Z + i Z \subset B(K,H) + i B(K,H) = B(K,H)_c = B_{\Cdb}(K_c,H_c).$$   
Then $Z_c Z_c^* Z_c \subset Z_c$, so that $Z_c$ is a complex TRO. 
The real case of   \cite[Lemma 8.3.2]{BLM} holds: 
 a ternary morphism between real TRO's is completely contractive, and is completely isometric if it is also one-to-one. 
Indeed if $T : Z \to W$ is a real ternary morphism then $T_c$ is a complex ternary morphism. 
So by the complex theory (see e.g.\  \cite[Lemma 8.3.2]{BLM} ) $T_c$ and hence $T$ are completely contractive.  If also $T$ is one-to-one then $T_c$ is one-to-one so 
completely  isometric. 
Hence $T$ is completely isometric.      
 It is proved in e.g.\ \cite{Sharma} that conversely a completely isometric surjection between real TRO's is a ternary morphism (or this follows from the 
 complex case by passing to the complexifications). 
    
\section{Complexifications of real operator spaces}

One of the cornerstones of the study of real operator spaces
is Ruan's unique complexification theorem.   By an {\em operator space complexification} of a real operator space $X$ 
we mean a pair $(X_c, \kappa)$ consisting of a complex operator space $X_c$ and a real linear complete isometry $\kappa : X \to X_c$ 
such that $X_c = \kappa(X) \oplus i \, \kappa(X)$ as a vector space.   For simplicity we usually identify $X$ and $\kappa(X)$ and write $X_c = X + i \, X$.

\medskip

{\bf Remarks.}  1)\  By the closed graph theorem  the projection $P$ onto $i(X)$ is bounded, and so $X_c$ is real isomorphic to a Banach space direct sum of 
$X$ with itself.  Some authors may  also assume that the projection $P$ onto $i(X)$ is contractive (in our case we would want completely contractive here), but for `reasonable' 
complexifications (see below)  this will be automatic. 
Note that $x = P(x) - i P(ix)$ for all $x \in X_c$. 

\smallskip

 2)\ Not every complex operator space is the complexification of a real operator space.  Indeed  on  a complex operator space $X$ there need not 
 exist  any closed subspace $Y$ with $X = Y \oplus iY$.    Dramatic examples of this may be found in e.g.\ \cite{FerR}, which contains infinite dimensional complex Banach spaces 
 which are not the direct sum of any two infinite dimensional real subspaces (indeed the examples there are much more startling).   For a simpler example one may use 
 for example any complex Banach space $X$ which is not complex isomorphic to its complex conjugate $\bar{X}$ (see e.g.\ \cite{Kalton} for an elementary example).    
 If $X = Y + i \, Y$ as above then
 the map $\theta(y_1 + i y_2) = y_1 - i y_2$ is a bicontinuous isomorphism onto $\bar{X}$.    Indeed if $z = y_1 - i y_2$ with $y_1, y_2 \in Y$ then 
 $\| z \|_{\bar{X}} \leq \|  y_1 \| + \|  y_2 \| = \| P(z) \| + \| P(iz) \|$ is dominated by a constant times $\| z \|_X$.  So the map is bicontinuous
 by the open mapping theorem.   This is a contradiction (one may assign any compatible operator space structure to  $X$, such as Min$(X)$). 
 
 The above also shows that complex operator spaces which are real linearly completely isometric, actually need not have any complex linear completely isometric
 surjection between them. For a $C^*$-algebra example of this, if $A$ is a unital C*-algebra not $*$-isomorphic to its opposite C*-algebra $A^\circ$ (see e.g.\ \cite{C}),   consider 
the map $\theta(a) = (a^*)^\circ$.   This is a surjective real linear unital complete isometry $A \to A^\circ$ (even positive 
since if $a = x^2$ for $x = x^*$ then $a^\circ = (x^\circ)^2$).
However by an operator space version of the Kadison-Banach-Stone theorem, if there exists a
 surjective complex linear complete isometry $A \to A^\circ$ then $A$ is $*$-isomorphic to $A^\circ$
 (see e.g.\ \cite[Theorem 4.4]{RComp}).

 \medskip

We say that an operator space complexification $X_c = X + iX$ of an operator space $X$ 
is {\em (completely) reasonable} if  the map 
$\theta_X : x+iy \mapsto x - iy$ is a complete isometry,  for $x, y \in X$.  
Any reasonable complexification of a Banach space possesses a canonical conjugate linear isometric period 2 automorphism. Conversely, given any complex Banach space $X$ with a conjugate linear  isometric period 2 automorphism $\theta$, it is well known (or an exercise) 
  that $X$ is the reasonable complexification of the set of fixed points of $\theta$.  A similar result holds for 
 operator spaces.  A completely reasonable operator space complexification  $X_c$ possesses a canonical conjugate linear  completely isometric period 2 automorphism, namely 
the map $\theta_X$ above.   This gives an action of  the group $\Zdb_2$ as complete isometries on $X_c$, via the identity map and $\theta_X$.   We will use this notation many times in our paper. 

Henceforth we shall shorten `completely reasonable' to `reasonable' or `reasonable operator space complexification', since we will want all complexifications hereafter to be completely reasonable. 
 
 \begin{proposition} \label{chco}  Let $X$ be a real operator space with a complete isometry $\kappa : X \to Y$ into a complex operator space. 
 Then $(Y,\kappa)$ is a reasonable operator space complexification  of $X$ if and only if $Y$ possesses a 
conjugate linear completely isometric period 2 automorphism whose fixed points are $\kappa(X)$.  
 \end{proposition}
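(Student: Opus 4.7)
The proof is essentially an unwinding of definitions once one observes that the automorphism $\theta$ in the statement must literally coincide with the map $\theta_X$ from the definition of ``reasonable''. I would proceed by checking each direction in turn.

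For the forward direction, assume $(Y,\kappa)$ is a reasonable operator space complexification. Define $\theta_X : Y \to Y$ by $x + iy \mapsto x - iy$ for $x, y \in \kappa(X)$, which is well defined because of the direct sum decomposition $Y = \kappa(X) \oplus i\,\kappa(X)$. By hypothesis $\theta_X$ is completely isometric. A direct calculation with $\lambda = a + ib \in \Cdb$ shows $\theta_X(\lambda z) = \bar\lambda\,\theta_X(z)$, so $\theta_X$ is conjugate linear; it is visibly period 2, and its fixed-point set is exactly $\kappa(X)$.

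For the backward direction, let $\theta$ be a conjugate linear completely isometric period 2 automorphism of $Y$ with fixed-point set $\kappa(X)$. The key step is the decomposition $z = \tfrac{1}{2}(z + \theta(z)) + \tfrac{1}{2}(z - \theta(z))$. Since $\theta^2 = \mathrm{id}$, the first summand is a fixed point of $\theta$, hence lies in $\kappa(X)$. For the second summand $w := \tfrac{1}{2}(z - \theta(z))$, note $\theta(w) = -w$, and using conjugate linearity, $\theta(-iw) = i\theta(w) = i(-w) = -iw$, so $-iw \in \kappa(X)$ and therefore $w \in i\,\kappa(X)$. This gives $Y = \kappa(X) + i\,\kappa(X)$. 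For directness of the sum: if $\kappa(x) + i\kappa(y) = 0$ then applying $\theta$ yields $\kappa(x) - i\kappa(y) = 0$ (using that $\theta$ fixes $\kappa(x), \kappa(y)$ and is conjugate linear), and injectivity of $\kappa$ forces $x = y = 0$.

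Finally, for any $x, y \in X$ we have $\theta(\kappa(x) + i\kappa(y)) = \kappa(x) - i\kappa(y)$ by conjugate linearity, so $\theta$ agrees with the candidate map $\theta_X$ from the definition of reasonable complexification. Since $\theta$ is a complete isometry, so is $\theta_X$, and $(Y,\kappa)$ is a reasonable operator space complexification of $X$. There is no real obstacle here; the only point that requires a moment's care is the identification of the $(-1)$-eigenspace of $\theta$ with $i\,\kappa(X)$, which is where conjugate linearity is used in the essential way.
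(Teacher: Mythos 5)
Your proof is correct and follows essentially the same route as the paper's: the converse direction uses the identical decomposition $z = \tfrac{1}{2}(z+\theta(z)) + i\cdot\tfrac{1}{2i}(z-\theta(z))$ with both pieces identified via the fixed-point set, and the forward direction is the routine verification the paper leaves as ``easy.'' You simply spell out a few more details (directness of the sum, conjugate linearity of $\theta_X$) than the paper does.
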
 
 
\begin{proof}   The one direction is easy and mentioned above.   Conversely, 
given such an automorphism $\theta$, that $Y = \kappa(X) \oplus i \kappa(X)$ is just as in the Banach space case. 
Indeed  any $z \in Y$ may be written as 
$z = x + i y  \in \kappa(X) + i \, \kappa(X),$ where $x = \frac{z + \theta(z)}{2}$ and $y = \frac{z - \theta(z)}{2i}$ are in $\kappa(X)$. 
Since $\theta$ is conjugate linear we have  $\theta(x + iy) = x - iy$, and since $\theta$ is a complete isometry  
we have that $(Y,\kappa)$ is a reasonable  complexification.
\end{proof}

Ruan showed that a real operator space 
$X$ possesses a reasonable operator space complexification, which is unique up to complete isometry.
The `existence' of such complexification follows quickly from   Proposition \ref{chco}.
Indeed suppose that  $X \subset B(H)$ for a real Hilbert space $H$, and let $H_c$ be the canonical
Hilbert space complexification of $H$.   Let  $\kappa : B(H) \to B_{\Cdb}(H_c)$ be  $\kappa(T) = T_c$. 
It is easy to check that $\kappa$  is a faithful $*$-homomorphism, and  $\theta(T) =  \theta_H \circ T \circ \theta_H$ is a period 2 conjugate linear $*$-automorphism of 
the $C^*$-algebra $B(H_c)$ whose fixed points are $\kappa(B(H))$.     Hence  $\kappa$ and $\theta$ are completely isometric.
So $(B_{\Cdb}(H_c), \kappa)$ is a  reasonable operator space complexification
of $B(H)$ by Proposition \ref{chco}.  Hence $\kappa(X) + i \kappa(X)$ is a reasonable operator space complexification of $X$.

This complexification may be identified up to real  complete isometry with the operator subspace $V_X$ of $M_2(X)$ 
consisting of matrices of the form 
\begin{equation} \label{ofr} \begin{bmatrix}
       x    & -y \\
       y   & x
    \end{bmatrix}
    \end{equation} 
    for $x, y \in X$. Note that $H_c \cong H^{(2)}$ as real Hilbert spaces
and we have 
$$B_{\Cdb}(H_c) \subset B_{\Rdb}(H_c) \cong B(H^{(2)}) \cong M_2(B(H)).$$ These identifications are easily checked to be real complete isometries.
The canonical embedding $\kappa : B(H) \to B_{\Cdb}(H_c)$ above and the associated embedding $B(H) + i B(H) \to \kappa(B(H)) + i \kappa(B(H)) = B_{\Cdb}(H_c)$, 
when viewed as a map into $M_2(B(H))$ by the identifications above, correspond to 
the map $$x + iy  \mapsto \begin{bmatrix}
       x    & -y \\
       y   & x
    \end{bmatrix} \in V_{B(H)} \subset M_2(B(H)) \; , \qquad x, y \in B(H).$$
Thus $B_{\Cdb}(H_c)$ is real completely isometric to $V_{B(H)}$.
The operator $i I$ in $B_{\Cdb}(H_c)$
corresponds to the matrix $$u = \begin{bmatrix}
       0   & -1 \\
       1   & 0
    \end{bmatrix}.$$ 
    Then  $\theta_{B(H)}(x + iy) = x -iy$ for $x, y \in B(H)$,     corresponds to the completely isometric 
    operation $z \mapsto u^* z u = - u zu$ on $V_{B(H)}$.  
     Restricting to $X$, we see that  $\theta_{X}$ is completely isometric, and $X_c$ 
    is real completely isometric to $V_X$.
    
\begin{theorem} \label{rcth} {\rm (Ruan's unique complexification theorem) } \ Let $X$ be a real operator space. Then $X$ possesses a  reasonable operator space complexification, which is unique up to complete isometry.   That is if $Y_1, Y_2$ are complex operator spaces such that for $k = 1, 2$ if $u_k : X \to Y_k$ is a real linear complete isometry
with $u_k(X) + i u_k(X) = Y_k$, and such that $u_k(x) + i u_k(y) \mapsto u_k(x) - i u_k(y)$ is a complete isometry, 
then there exists a unique surjective complex linear  complete isometry $\rho : Y_1 \to Y_2$ with $\rho \circ u_1 = u_2$.
 \end{theorem}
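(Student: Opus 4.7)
The plan is to define $\rho$ directly from the direct sum decompositions and then prove it is completely isometric by showing that in any reasonable complexification the matrix norms of elements $u(x)+iu(y)$ are intrinsically determined by the real operator space $X$.

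First I would set
$$\rho(u_1(x) + i u_1(y)) := u_2(x) + i u_2(y), \qquad x, y \in X.$$
Since each $Y_k = u_k(X) \oplus i u_k(X)$ as a real direct sum, $\rho$ is well defined, complex linear, and bijective (its inverse is the analogous formula with the roles of $Y_1$ and $Y_2$ swapped), and clearly $\rho \circ u_1 = u_2$. Uniqueness is immediate: any complex linear $\rho$ with $\rho \circ u_1 = u_2$ must agree with the formula above on $i u_1(X)$ as well, hence on all of $Y_1$.

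The substantive step is the complete isometry. Let $(Y,u)$ be any reasonable complexification of $X$ and let $\theta$ be the associated conjugate linear period 2 complete isometry from Proposition \ref{chco}. Fix $x = [x_{ij}], y = [y_{ij}] \in M_n(X)$, set $z := u^{(n)}(x) + i u^{(n)}(y)$, and observe that
$$M := \begin{bmatrix} z & 0 \\ 0 & \theta^{(n)}(z) \end{bmatrix} \in M_{2n}(Y)$$
has norm equal to $\|z\|_{M_n(Y)}$, since $\theta^{(n)}$ is isometric. Now conjugate by the scalar unitary $W = U \otimes I_n$, where $U = \tfrac{1}{\sqrt{2}}\begin{bmatrix} 1 & i \\ i & 1 \end{bmatrix}$. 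Using $z + \theta^{(n)}(z) = 2 u^{(n)}(x)$ and $z - \theta^{(n)}(z) = 2 i u^{(n)}(y)$, a short direct calculation gives
$$W M W^* = \begin{bmatrix} u^{(n)}(x) & u^{(n)}(y) \\ -u^{(n)}(y) & u^{(n)}(x) \end{bmatrix} = u^{(2n)}\!\left(\begin{bmatrix} x & y \\ -y & x \end{bmatrix}\right).$$
Conjugation by a scalar unitary preserves the norm on $M_{2n}(Y)$, and $u^{(2n)}$ is isometric (as $u$ is a real linear complete isometry), so
$$\|u^{(n)}(x) + i u^{(n)}(y)\|_{M_n(Y)} = \left\|\begin{bmatrix} x & y \\ -y & x \end{bmatrix}\right\|_{M_{2n}(X)}.$$
The right-hand side depends only on the operator space structure of $X$, so applying the identity with $(Y,u) = (Y_1,u_1)$ and $(Y_2,u_2)$ shows that $\rho^{(n)}$ is isometric for every $n$.

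The main obstacle is purely the bookkeeping in the unitary conjugation; conceptually everything is forced by reasonability, which via Proposition \ref{chco} supplies the $\theta$ automorphism that exhibits each matrix norm $\|u(x) + i u(y)\|$ as intrinsic data of $X$ (this also recovers the identification with $V_X \subset M_2(X)$ noted before the theorem).
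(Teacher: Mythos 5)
Your argument is correct and is essentially the paper's proof: both reduce uniqueness to the identity $\|u^{(n)}(x)+iu^{(n)}(y)\|_{M_n(Y)}=\bigl\|\left[\begin{smallmatrix} x & -y\\ y & x\end{smallmatrix}\right]\bigr\|_{M_{2n}(X)}$, for which the paper cites ``the simple fact (2.2)'' of Blecher--Neal, while your conjugation of $\mathrm{diag}(z,\theta^{(n)}(z))$ by the scalar unitary $U\otimes I_n$ is simply a self-contained proof of that fact (your sign convention on the off-diagonal blocks differs harmlessly from the paper's, the two matrices being unitarily equivalent). Like the paper, you defer existence to the construction given before the theorem statement, which is fine.
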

 
\begin{proof}  A proof of the `easy direction', the `existence', was given above.
For the uniqueness, let $\kappa = u_k$.  It is enough to show that the map taking $\kappa(x) + i \kappa(y)$ to the matrix in (\ref{ofr}) is a complete isometry into $M_2(X)$.
Let $x = [x_{ij}], y = [ y_{ij}]$. Then $$\| [ \kappa(x_{ij})+\iota \, \kappa(y_{ij})] \| = \left\| \begin{bmatrix}
       \kappa(x_{ij})    & -\kappa(y_{ij})\\
        \kappa(y_{ij})   & \kappa(x_{ij})
    \end{bmatrix} \right\| = \left\| \begin{bmatrix}
       x    & -y \\
       y   & x
    \end{bmatrix} \right\| $$ by the simple fact  (2.2) in \cite{BNmetric2}. This concludes the proof.  
\end{proof}

  \begin{theorem} \label{crcb}
	 If $X$ and $Y$ are real operator spaces, then $(CB_{\Cdb}(X_c,Y_c ), \kappa)$ is a reasonable operator space complexification  of $CB_{\Rdb}(X,Y)$,
	 where $\kappa(T) = T_c$.
\end{theorem}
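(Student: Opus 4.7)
The plan is to invoke Proposition \ref{chco}: it suffices to establish (i) that $\kappa$ is a real linear complete isometry, and (ii) that $CB_{\Cdb}(X_c, Y_c)$ carries a conjugate linear completely isometric period-$2$ automorphism whose fixed-point set equals $\kappa(CB_{\Rdb}(X, Y))$.

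For (ii), the natural candidate is $\Theta(\Phi) = \theta_Y \circ \Phi \circ \theta_X$ for $\Phi \in CB_{\Cdb}(X_c, Y_c)$, where $\theta_X$ and $\theta_Y$ are the canonical conjugate linear period-$2$ complete isometries on $X_c$ and $Y_c$ discussed before Proposition \ref{chco}. Composing two conjugate linear maps forces $\Theta(\Phi)$ to be $\Cdb$-linear and forces $\Theta$ itself to be conjugate linear in $\Phi$; the relations $\theta_X^2 = I$ and $\theta_Y^2 = I$ make $\Theta$ period $2$. Under the standard identification $M_n(CB_{\Cdb}(X_c, Y_c)) = CB_{\Cdb}(X_c, M_n(Y_c))$, the matrix $[\Theta(\Phi_{ij})]$ becomes $\theta_{M_n(Y)} \circ [\Phi_{ij}] \circ \theta_X$ (using the canonical identification $M_n(Y_c) = M_n(Y)_c$, which carries its own involution $\theta_{M_n(Y)}$); this is a composition of complete isometries, so $\Theta$ is completely isometric. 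For the fixed-point identification, if $\Phi = \Theta(\Phi)$ then $\Phi(x) = \theta_Y(\Phi(x))$ for every $x \in X$, so $\Phi$ carries $X$ into $Y$; setting $T = \Phi|_X$ gives $T \in CB_{\Rdb}(X, Y)$, and $\Cdb$-linearity forces $\Phi = T_c = \kappa(T)$. The reverse inclusion $\kappa(CB_{\Rdb}(X, Y))$ into the fixed points is an immediate direct check.

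For (i), the cleanest route is through the matrix model from the paragraph after Theorem \ref{rcth}: $X_c \cong V_X \subseteq M_2(X)$ and $Y_c \cong V_Y \subseteq M_2(Y)$ completely isometrically, and under these identifications $T_c$ is exactly the restriction to $V_X$ of the amplification $T^{(2)} \colon M_2(X) \to M_2(Y)$. Hence $\|T_c\|_{\mathrm{cb}} \leq \|T^{(2)}\|_{\mathrm{cb}} = \|T\|_{\mathrm{cb}}$, while the reverse inequality is immediate because $T = T_c|_X$ and $X \hookrightarrow X_c$ is a complete isometry. Promoting this from norm to complete isometry for $\kappa$ uses $M_n(Y_c) = M_n(Y)_c$: a matrix $[T_{ij}] \in M_n(CB_{\Rdb}(X, Y))$ corresponds to $\Psi \in CB_{\Rdb}(X, M_n(Y))$, $[\kappa(T_{ij})]$ corresponds to $\Psi_c \in CB_{\Cdb}(X_c, M_n(Y_c))$, and the previous sentence applied to $\Psi$ gives $\|\Psi_c\|_{\mathrm{cb}} = \|\Psi\|_{\mathrm{cb}}$.

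The main obstacle is the bookkeeping with the canonical identifications $M_n(Y_c) = M_n(Y)_c$ and $M_n(CB_{\Cdb}(X_c, Y_c)) = CB_{\Cdb}(X_c, M_n(Y_c))$, and checking that they intertwine the involutions $\theta$ and the complexification functor $T \mapsto T_c$ as claimed. Once these compatibilities are in place, parts (i) and (ii) are immediate and Proposition \ref{chco} completes the proof.
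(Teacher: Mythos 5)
Your proposal is correct and follows essentially the same route as the paper: both verify that $\kappa$ is a complete isometry and then exhibit the conjugate linear completely isometric period-$2$ automorphism $\Phi \mapsto \theta_Y \circ \Phi \circ \theta_X$ whose fixed points are exactly the maps $S_c$, invoking Proposition \ref{chco}. The only difference is that you spell out the complete isometry of $\kappa$ via the $V_X \subseteq M_2(X)$ model and the identification $M_n(CB_{\Cdb}(X_c,Y_c)) = CB_{\Cdb}(X_c,M_n(Y_c))$, a point the paper dispatches with ``clearly.''
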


\begin{proof}  Clearly $T \mapsto T_c$ is  a complete isometry from $CB_{\Rdb}(X,Y)$ into $CB_{\Cdb}(X_c,Y_c )$.
 Conversely, there is a conjugate linear completely isometric period 2 automorphism on $CB_{\Cdb}(X_c,Y_c)$ defined by $T \mapsto \theta_Y \circ T \circ \theta_X$.
Its fixed points are precisely the $S_c$ for $S \in CB_{\Rdb}(X,Y)$.  Indeed suppose that 
$T  \circ \theta_X = \theta_Y \circ T$.   Then $T$ takes the fixed points of $\theta_X$ into the fixed points of $\theta_Y$.
Since these fixed point spaces are (the copies of) $X$ and $Y$ respectively, this  gives a map $S : X \to Y$.
Then $T$ and $S_c$ agree on $X$ and hence on $X_c = X + i X$.  
Thus $CB_{\Cdb}(X_c,Y_c) = CB_{\Rdb}(X,Y)_c$, a  reasonable complexification. \end{proof} 

\begin{corollary}  \label{recc}  If $X$ and $Y$ are real operator spaces, then $T \in CB_{\Cdb}(X_c,Y_c)$ equals $S_c$ for $S \in CB_{\Rdb}(X,Y)$ if and only if $T(X) \subset Y$, and 
if and only if $T = \theta_Y \circ T \circ \theta_X$.  
If $X = Y$ the last condition becomes $T \circ \theta_X = \theta_X  \circ  T$, which is equivalent to $T$ being $\Zdb_2$-equivariant 
with the $\Zdb_2$-action mentioned  above  mentioned above Proposition {\rm \ref{chco}}.  \end{corollary}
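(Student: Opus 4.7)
The plan is to establish the cycle of implications (a) $\Rightarrow$ (b) $\Rightarrow$ (c) $\Rightarrow$ (a), where (a) states $T = S_c$ for some $S \in CB_{\Rdb}(X,Y)$, (b) states $T(X) \subset Y$, and (c) states $T = \theta_Y \circ T \circ \theta_X$. The key ingredients already appear in the proof of Theorem \ref{crcb}, so the work is mainly bookkeeping around the fixed-point description of the real part inside a reasonable complexification (Proposition \ref{chco}).

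First, (a) $\Rightarrow$ (b) is immediate, since $S_c|_X = S$ maps into $Y$. For (b) $\Rightarrow$ (c), I would take any $z = x + iy \in X_c$ with $x, y \in X$ and compute directly, using complex linearity of $T$ and that $\theta_Y$ is conjugate linear and fixes the elements $T(x), T(y) \in Y$:
\[
\theta_Y(T(\theta_X(x + iy))) = \theta_Y(T(x) - iT(y)) = T(x) + iT(y) = T(x + iy).
\]
For (c) $\Rightarrow$ (a), I would simply invoke the argument already recorded in the proof of Theorem \ref{crcb}: since $\theta_X$ is involutive, (c) is equivalent to $T \circ \theta_X = \theta_Y \circ T$, which forces $T$ to carry the $\theta_X$-fixed point set (namely $X$) into the $\theta_Y$-fixed point set (namely $Y$); setting $S := T|_X$, one observes that $T$ and $S_c$ are complex linear maps on $X_c = X + iX$ that agree on $X$, hence coincide everywhere.

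For the coda when $X = Y$, composing $T = \theta_X \circ T \circ \theta_X$ with $\theta_X$ on either side (and using $\theta_X^2 = \mathrm{id}$) gives the equivalent form $T \circ \theta_X = \theta_X \circ T$. The $\Zdb_2$-action on $X_c$ introduced just above Proposition \ref{chco} has nontrivial element acting as $\theta_X$, so $\Zdb_2$-equivariance of $T$ is literally the condition that $T$ commute with $\theta_X$. There is no substantive obstacle: the whole point is that Proposition \ref{chco} lets us translate between the linear symmetry condition on $T$ and the range condition that $T$ preserves the real structure, and the only step needing care is the direction $(b) \Rightarrow (c)$, where one must be vigilant about complex versus conjugate linearity in the short computation above.
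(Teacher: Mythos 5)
Your proposal is correct and follows essentially the same route as the paper: the paper establishes (b) $\Rightarrow$ (a) by the same ``two complex linear maps agreeing on $X$ agree on $X_c$'' observation, and obtains the equivalence with the condition $T = \theta_Y \circ T \circ \theta_X$ from the fixed-point argument already recorded in the proof of Theorem \ref{crcb}, exactly as you do. Your explicit computation for (b) $\Rightarrow$ (c) and the reformulation in the $X = Y$ case are both sound.
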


\begin{proof}   If $T(X) \subset Y$
then $T = (T_{|X})_c$ since both these complex linear  maps agree on $X$.   The rest is contained in the last proof. \end{proof} 

\begin{corollary}  \label{ducr}   {\rm (Ruan)}\ If $X$ is a real operator space then the operator space  dual 
$(X_c)^*$ is a reasonable operator space complexification of $X^*$, with embedding $\kappa(\varphi) = \varphi_c$.   
\end{corollary}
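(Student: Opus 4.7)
The plan is to deduce this as the special case $Y = \Rdb$ of Theorem \ref{crcb}. The main thing that needs to be recorded is that $\Rdb$, viewed as the one-dimensional real operator space, has reasonable operator space complexification $\Rdb_c = \Cdb$. This is immediate from Proposition \ref{chco} applied to the conjugate linear complete isometry $z \mapsto \bar z$ of $\Cdb$, whose fixed point set is $\Rdb$. (Alternatively, the matrix realization $V_{\Rdb} \subset M_2(\Rdb)$ from the construction preceding Theorem \ref{rcth} is the familiar embedding $a + \ii b \mapsto \bigl[\begin{smallmatrix} a & -b \\ b & a \end{smallmatrix}\bigr]$ of $\Cdb$ into $M_2(\Rdb)$.)

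Next I would recall the standard operator space dual convention $M_n(X^*) = CB_{\Rdb}(X, M_n(\Rdb))$, which identifies $X^* = CB_{\Rdb}(X, \Rdb)$ as operator spaces. Similarly $(X_c)^* = CB_{\Cdb}(X_c, \Cdb)$ as complex operator spaces. With these identifications, the conclusion of Theorem \ref{crcb} specialized to $Y = \Rdb$ reads precisely
\[
\bigl( (X_c)^*, \kappa \bigr) = \bigl( CB_{\Cdb}(X_c, \Rdb_c), \kappa \bigr) \text{ is a reasonable complexification of } CB_{\Rdb}(X, \Rdb) = X^*,
\]
with $\kappa(\varphi) = \varphi_c$, which is the content of the corollary.

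Since Theorem \ref{crcb} already does all the work, there is no genuine obstacle. The only step worth pausing over is the identification $\Rdb_c = \Cdb$ at the operator space level, and that is immediate from Proposition \ref{chco} as above.
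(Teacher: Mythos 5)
Your proposal is correct and matches the paper's (implicit) argument: the corollary is stated without proof precisely because it is the specialization of Theorem \ref{crcb} to $Y = \Rdb$, using $\Rdb_c = \Cdb$ and the standard identifications $X^* = CB_{\Rdb}(X,\Rdb)$, $(X_c)^* = CB_{\Cdb}(X_c,\Cdb)$. Your care in checking that $\Cdb$ is the reasonable complexification of $\Rdb$ via Proposition \ref{chco} is exactly the right (if routine) point to record.
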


The complexification of a real unital operator space or operator system may obviously be taken to be a complex unital operator space or operator system. 

\section{The characterization of commutative real $W^*$-algebras} \label{chcvn}

The characterization  in this section will be used in \cite{BReal} to prove several deeper facts about real operator spaces and algebras. 
We will improve the characterization of commutative real $W^*$-algebras  from \cite{Li} (see e.g.\ 
Section 6.3 
there, and e.g. \cite{IP}).     Indeed the commutative real $W^*$-algebras  come from the 
commutative complex $W^*$-algebras regarded as real algebras.   Ideas in the proof may be interpreted as matching certain  facts in   `measurable dynamics'  as we will remark after the theorem.

  In the following we allow $(0)$ to be considered as a $W^*$-algebra.
 
 \begin{theorem}  \label{coisre}   Any  commutative real $W^*$-algebra $M$ is (real) $*$-isomorphic to the direct sum of a 
 commutative complex $W^*$-algebra, and the set of selfadjoint elements in a commutative complex $W^*$-algebra. 
 That is $M \cong L^\infty(X,\mu,\Cdb) \oplus L^\infty(Y,\nu,\Rdb)$. 
 Here $(X,\mu)$ and $(Y,\mu)$ are localizable (or semifinite) measure spaces (and we allow one of these 
 to be the empty set, and in this case interpret the $L^\infty$-space to be $(0)$). 
  \end{theorem}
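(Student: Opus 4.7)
The plan is to bootstrap from the well-known classification of commutative complex $W^*$-algebras by passing to the complexification. First form $M_c = M + iM$, which is a commutative complex $W^*$-algebra (this uses the fact, recorded in \cite{Li}, that the complexification of a real $W^*$-algebra is again a $W^*$-algebra). By Proposition \ref{chco}, $M_c$ carries a canonical conjugate-linear $*$-automorphism $\theta = \theta_M$ of period $2$ whose fixed-point set is precisely $M$. The classical Gelfand--von Neumann classification gives a $*$-isomorphism $M_c \cong L^\infty(Z,\mu,\Cdb)$ for some localizable measure space $(Z,\mu)$, so the task reduces to identifying the real $*$-subalgebra of $\theta$-fixed points inside $L^\infty(Z,\mu,\Cdb)$.

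Next, translate $\theta$ into measurable dynamics. Composing $\theta$ with pointwise complex conjugation $c(f) = \bar f$ yields a complex-linear $*$-automorphism of $L^\infty(Z,\mu,\Cdb)$, which by the standard Gelfand-type duality for commutative $W^*$-algebras comes from a nonsingular point transformation of $(Z,\mu)$. Hence $\theta$ takes the form $\theta(f) = \overline{f \circ \sigma^{-1}}$ for some nonsingular measurable map $\sigma \colon Z \to Z$, and the identity $\theta^2 = \mathrm{id}$ forces $\sigma^2 = \mathrm{id}$ a.e. Let $Y \subset Z$ be the fixed-point set of $\sigma$ (a measurable subset, well-defined modulo null sets) and $W = Z \setminus Y$, so that $\sigma|_W$ is a free $\Zdb_2$-action. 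On $Y$ the operator $\theta$ is just pointwise complex conjugation, so the $\theta$-fixed points of $L^\infty(Y,\mu,\Cdb)$ are exactly $L^\infty(Y,\mu,\Rdb)$; this yields the second summand.

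To handle $W$, select a measurable fundamental domain $X \subset W$ with $W = X \sqcup \sigma(X)$; in the localizable/semifinite setting one first does this on each $\sigma$-finite piece (by a standard Borel selection argument, since the action is free) and then patches together via a maximal family plus Zorn. A $\theta$-fixed function $f$ on $W$ is then determined by its restriction $f|_X$, via the relation $f(\sigma(z)) = \overline{f(z)}$, and conversely any element of $L^\infty(X,\mu,\Cdb)$ extends to such an $f$. The restriction map $f \mapsto f|_X$ is multiplicative and intertwines the $W^*$-involution on $M$ (which is ambient complex conjugation) with complex conjugation on $L^\infty(X,\mu,\Cdb)$, so it is a real $*$-isomorphism and a $w^*$-homeomorphism. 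Gluing the two pieces gives the asserted decomposition $M \cong L^\infty(X,\mu,\Cdb) \oplus L^\infty(Y,\nu,\Rdb)$, where one may take $\nu = \mu|_Y$.

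The main obstacle lies in the geometric content of the two middle steps: promoting the abstract conjugate-linear involution $\theta$ to a spatial involution $\sigma$ of the underlying measure space, and then producing a measurable fundamental domain $X$ for the free $\Zdb_2$-action on $W$. Both are classical in the $\sigma$-finite or countably generated case, but require some care in the fully localizable generality claimed here; this is precisely the `measurable dynamics' interpretation to which the statement of the theorem alludes.
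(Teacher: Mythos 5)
Your strategy is genuinely different from the paper's, and unfortunately it runs aground exactly where the paper's own Remark 2 (following the theorem) warns that it will. The paper never realizes the involution spatially: it works entirely in the projection lattice of $M_c$, using Zorn's lemma to produce (i) a largest projection $p$ such that $\pi(q)=q$ for \emph{every} subprojection $q\leq p$, on which one then shows $\pi(x)=x^*$ so that the fixed points are $(pM_c)_{\rm sa}$, and (ii) a maximal projection $q$ in $p^\perp M_c$ with $\pi(q)\perp q$, whose maximality forces $\pi(q)=1-q$; the map $x\mapsto x+\pi(x)$ is then a real $*$-isomorphism from $qM_c$ onto the fixed points of $p^\perp M_c$. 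This is an abstract, point-free surrogate for your ``fixed set plus fundamental domain'' picture, and it is what lets the argument go through for arbitrary localizable measure spaces.

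The gap in your version is the step ``which by the standard Gelfand-type duality for commutative $W^*$-algebras comes from a nonsingular point transformation of $(Z,\mu)$.'' For a general localizable (non-standard, non-$\sigma$-finite) measure space a $*$-automorphism of $L^\infty(Z,\mu)$ is only guaranteed to induce an automorphism of the measure \emph{algebra}; lifting it to an actual point map requires a lifting theorem that is available for standard spaces but not in the generality claimed. You acknowledge this, and also that the existence of a measurable fundamental domain for the free part needs ``some care,'' but neither issue is resolved, and they are precisely the content of the theorem in its stated generality. A secondary problem: even granting a point map $\sigma$, your set $Y=\{z:\sigma(z)=z\}$ of pointwise fixed points need not coincide with the correct splitting set, which is the largest measurable set on which the induced automorphism of the measure algebra is trivial (the paper's $p$); the action on the complement of the pointwise fixed set need not be essentially free. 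If you want to salvage this route, you should either restrict to standard measure spaces (where von Neumann's lifting theorem applies, as the paper notes), or recast the whole argument at the level of the measure algebra / hyperstonean spectrum --- at which point you are essentially reconstructing the paper's projection-lattice proof.
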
 

\begin{proof}   
 Let $M$ be a commutative real $W^*$-algebra.   Then $M_c$ is a commutative $W^*$-algebra 
possessing a period 2 conjugate linear $*$-isomorphism $\pi$ with $M = \{ x \in M_c : \pi(x) = x \}$.   Consider 
the projections $p \in M_c$ such that $\pi(q) = q$ for all projections $q \leq p$.   
We claim there is a largest such projection.   If $(p_i)$ is an increasing chain of such projections, let $p = \sup \; p_i$. 
Then $\pi(p) = p$.  
For a projection $q \in M_c$ such that $q \leq p$ we have $p_i q  \leq p_i$ for each $i$.    For any spectral projection $r$ of 
$p_i q$ we have $r \leq p_i$ and $\pi(r) = r$.  So $\pi(p_i q) = p_i q = p_i \pi(q)$. 
In the limit $\pi(q)  = q$.    Thus chains have upper bounds, and so by Zorn's lemma  there is a maximal 
such projection $p$.   If $r$ is another such projection, then we claim that $e = p \vee r = p + r -pr$ is also another such projection. 
Indeed for a projection $q \leq p \vee r$ we have $\pi(q p) = qp$ since $q \wedge p \leq p$. 
Similarly $\pi(qr) = qr$  and $\pi(qpr) = qpr$, and so $\pi(q) = q$.  By maximality $p = e$ and $r \leq p$.

Let $N_0 = p^\perp \, M_c$.  Assume that $p \neq 1$.  Note that $\pi(p^\perp) = p^\perp$ and $\pi(N_0) = N_0$. 
  If $r$ is any  nonzero projection in $N_0$ there must exist a projection $q \leq r$ 
with $\pi(q) \neq q$.   We claim that this implies that there exists some nonzero projection $q$ in $N_0$ with 
$\pi(q) = 1 - q \neq 0$.    Here $1$ denotes the identity of $N_0$, namely $p^\perp$. 
We consider the class ${\mathcal C}$ of nonzero  
projections $r$ in $N_0$ such that $\pi(r) \perp r$.  Given any projection $q$ in $N_0$ with 
$q \neq \pi(q)$ we claim that $q$ has a nonzero subprojection $r$  in ${\mathcal C}$.  
Indeed set $r = q - q \pi(q)$.     Note $r \neq 0$ or else $q \leq \pi(q)$ and (applying $\pi$) $\pi(q) = q$. 
This contradicts that $q \neq \pi(q)$.    Hence $\pi(r) \neq 0$.  Also 
$$r \pi(r) = q \pi(q) -  q \pi(q) -  q \pi(q)  + q \pi(q) = 0.$$ 
Thus $r \in {\mathcal C}$.  

Let $q$ be a  maximal  projection in ${\mathcal C}$.   That this exists follows by Zorn's lemma again: 
If $(p_i)$ is an increasing chain of  projections in ${\mathcal C}$, let $r = \sup \; p_i$. 
Since  $p_i \, \pi(p_i)  = 0$ we get $r \pi(r) = 0$.   
We now show that $\pi(q) = 1 - q$.   If this were false then since $q \pi(q) = 0$ we have that 
$r = 1 - q - \pi(q)$  is a nonzero projection in $N_0$.   So there exists a projection $e \leq r$ 
with $\pi(e) \neq e$.   Let $q' = e - e \pi(e)$.   Note that $q' \leq e \leq r \leq 1-q$. 
By the above $0 \neq q' \in  {\mathcal C}$. 
Then $q + q'$ is a projection dominating $q$, and $\pi(q+q') (q+q') = \pi(q) q' + \pi(q') q$. 
Now $\pi(q) q' = 0$ since $q' \leq r \leq 1 - \pi(q)$.  Applying $\pi$, we see 
that $\pi(q') q = 0$.   So $\pi(q+q') (q+q') =  0$, so that $q + q' \in  {\mathcal C}$. 
This contradicts the maximality of $q$.  

Next note that  $x \mapsto x + \pi(x)$ is a real $*$-isomorphism 
from $q M_c$ onto $\{ a \in N_0 : \pi(a) = a \}$.     Indeed this is the map 
$q x \mapsto qx \oplus (1-q) \pi (qx)$, which is a homomorphism since $q$ is central. 
And if  $a \in N_0$ with $\pi(a) = a$ 
then $qa \in q M_c = qN_0$ with $qa + \pi(qa) = qa + (1-q) \pi(a) = a$.

Claim: $\pi(x) = x^*$ on $p M_c$, so that for any $x \in p M_c$ we have  $\pi(x) = x$ if and only if $x = x^*$.   
Thus $\{ x \in pM_c : \pi(x) = x \} = (pM_c)_{\rm sa}$. 
  Note that $x \mapsto \pi(x)^*$ is a weak* continuous complex linear isometry on the  complex $W^*$-algebra $p M_c$ 
which is the identity on projections in $p M_c$. 
Hence  $\pi(x) = x^*$  on $p M_c$, by the spectral theorem and the fact that this is true for any projection in $p M_c$. 

Finally, we claim that $M$ is real $*$-isomorphic to the real $W^*$-algebra 
$(pM_c)_{\rm sa} \oplus q M_c$.  If $x \in M_c$ with $\pi(x) = x$, then $x =  xp + x p^\perp$, where $xp = \pi(x p)$ and $x p^\perp = \pi(x p^\perp) \in N_0$. 
So $$M = 
 \{ x \in pM_c : \pi(x) = x \}   \oplus \{ x \in N_0 : \pi(x) = x \}  \cong  (pM_c)_{\rm sa} \oplus q M_c.$$ 
 Since $pM_c$ and $q M_c$ are commutative 
 complex von Neumann algebras, we are done  (for the connection with localizable/semifinite measure spaces see e.g.\ \cite{BGL}).  
\end{proof} 

{\bf Remarks.}  1)\ The separable case of the last result is in \cite{Li} (see e.g.\ Theorem 6.3.6 there) and the work of Ayupov and his coauthors
(see e.g.\ \cite{ARU} and references therein), but we could not find a full proof of our statement above of the general case in those sources.
    In the noncommutative case the result fails: 
 for a start $M_{\rm sa}$ need not even be an algebra.
 For an explicit low dimensional example, the quaternions $\Hdb$ is not of form
 $M_{\rm sa} \oplus N$ for complex  von Neumann algebras $M$ and $N$.     Indeed the center of $\Hdb$ is $\Rdb 1$, so that there is no such nontrivial splitting,
 and $\Hdb$ is not a  complex  von Neumann algebra nor the selfadjoint part of one. 

\smallskip

2)\ 
Notice that in the proof of the claim of existence of the projection $q$ with $\pi(q) = 1-q$, we only used the fact that $\pi$ was a real $*$-automorphism of the complex von Neumann algebra $M_c$. 

Suppose that  $M_c = L^\infty(X, \mu)$ for a measure space $(X, \mu)$, and $\tau: L^\infty(X, \mu)\to L^\infty(X, \mu)$ is a complex $*$-automorphism with $\tau^2 = I$.
If $(X, \mu)$ is a standard measure space for example then by von Neumann's lifting theorem there is a 2-periodic measurable automorphism $\varphi$ of $(X, \mu)$ (this means the pushforward of $\mu$ under $\varphi$ is in the same class as $\mu$) that yields $\tau$ in the sense that $\tau(f) = f\circ \varphi$ for every $f\in L^\infty(X, \mu)$. 
In this case the projections $p$ and $q$ from the above proof have the following interpretations. The projection $p$ is the characteristic function of the set $X_0:=\{x\in X : \varphi(x) = x\}$ of fixed points of $\varphi$. The complement $X-X_0$ is then decomposed into a disjoint sum $X_1 \sqcup \varphi(X_1)$, where $X_1$ is a measurable subspace of $X$, whose characteristic function is the projection $q$ in the proof above.

In general, it is a simple fact that an action of $\Zdb$ or $\Zdb_n$, when $n$ is prime, is (essentially) free if and only if it has trivial kernel (i.e. is faithful).
Thus, any such action on a measure space $(X, \mu)$ yields a decomposition $X= E \sqcup F$ where the restriction of the action on $E$ is trivial and on $F$ is (essentially) free. Arguing similarly to the one in the proof above, one can show that for any action $\Zdb_n \acts (X, \mu)$, with $n\in\Ndb$ prime, given by the measure automorphism $\varphi: X\to X$,
there is a ``periodic decomposition'' of the form $$X= X_0 \sqcup X_1 \sqcup \varphi(X_1) \sqcup \varphi^2(X_1) \sqcup \cdots \varphi^{n-1}(X_1),$$ where the restriction of $\varphi$ to $X_0$ is the identity map.
These dynamical facts are surely well-known (although through a relatively quick search we were not able to locate the precise statements in standard literature).
One should also  note that the automorphism $\pi$ in the proof of the Theorem \ref{coisre} is not $\Cdb$-linear, so that  it does not come directly 
from a map on the underlying sets; and moreover there is no obvious appropriate map on the underlying sets without restrictions  on the measure space such as being standard.
Probably one may obtain such a map using 6.3.3 in \cite{Li}, and this would give an alternative `dynamics route' to the theorem.

\section{Complexifications, envelopes, and $G$-spaces} \label{Coen} 
We begin the section gently with some of the basic ideas in the operator space case, before throwing in the $G$-space technology which for non-experts may take some time to absorb.

 Ruan shows \cite{RComp, Sharma} that a real operator space $X$ is injective if and only if $X_c$ is complex injective.  Indeed an injective 
real operator space is real complemented in $B(H)$ so that $X_c$ is complemented in $B(H_c)$.  The other direction can be deduced from the next result and the fact that
$X$ is real complemented in $X_c$. 

\begin{lemma} \label{lem1}  A complex operator space is real injective if and only if it is complex injective.    \end{lemma}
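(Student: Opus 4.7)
The plan is to prove each direction by a direct construction, using only the fact (stated just before the lemma) that $B_{\Rdb}(H)$ is real injective for $H$ a real Hilbert space.

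For the direction ``complex injective $\Rightarrow$ real injective'', the strategy is to show that when $H$ is a complex Hilbert space, $B_{\Cdb}(H)$ is real completely contractively complemented in $B_{\Rdb}(H_{\Rdb})$, where $H_{\Rdb}$ denotes $H$ viewed as a real Hilbert space. An explicit projection is $P(A) = \tfrac{1}{2}(A - JAJ)$, where $J \in B_{\Rdb}(H_{\Rdb})$ is scalar multiplication by $\iota$ on $H$; a short calculation shows $P(A)J = JP(A)$, so $P$ lands in $B_{\Cdb}(H)$ and restricts to the identity there. Since $P$ is manifestly real completely contractive, and $X$ is complex (hence real) completely contractively complemented in some $B_{\Cdb}(H)$, composing the two retractions exhibits $X$ as a real completely contractive retract of the real injective space $B_{\Rdb}(H_{\Rdb})$, which suffices for real injectivity.

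For ``real injective $\Rightarrow$ complex injective'', the plan is an averaging argument. Given complex operator spaces $E \subseteq F$ and a complex completely contractive map $T : E \to X$, I will first invoke real injectivity to obtain a real completely contractive extension $S : F \to X$, then define
$$\tilde T(f) \; := \; \tfrac{1}{2}\bigl(S(f) - \iota \, S(\iota f)\bigr), \qquad f \in F,$$
where the outer $\iota$ is the complex structure of $X$ and the inner one that of $F$. Direct checks then verify that $\tilde T$ is complex linear, that $\tilde T|_E = T$ (using $T(\iota e) = \iota\, T(e)$ for $e \in E$), and that the triangle inequality at each matrix level, combined with $\iota$ acting as a complete isometry on both $F$ and $X$ and the complete contractivity of $S$, yields $\|\tilde T\|_{\mathrm{cb}} \leq 1$.

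Neither direction should pose serious technical difficulty; the care required is mainly bookkeeping, particularly tracking the two distinct complex structures in the second direction and keeping straight when a map is real versus complex linear in the first. An alternative route would combine Ruan's cited fact that $X$ is real injective iff $X_c$ is complex injective with a complete isometric identification $X_c \cong X \oplus_\infty \bar X$ as complex operator spaces; this approach is slicker, but the direct-sum identification itself requires a matrix-level computation which I would rather avoid here.
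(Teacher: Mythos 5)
Your proof is correct and is essentially the paper's own argument: the projection $P(A)=\tfrac{1}{2}(A-JAJ)$ in your first direction is exactly the paper's $P(T)(x)=\tfrac{1}{2}(T(x)-\iota T(\iota x))$, and the averaging formula $\tfrac{1}{2}(S(f)-\iota S(\iota f))$ in your second direction is the paper's $Q(x)=\tfrac{1}{2}(P(x)-\iota P(\iota x))$, merely applied to an arbitrary real-linear extension rather than to the complementing projection of $B_{\Cdb}(H)$ onto $X$. No gaps.
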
 

\begin{proof}   
A   complex injective operator space $X$ is (complex, hence real) completely isometrically embedded and complemented in $B_{\Cdb}(H)$ for a complex Hilbert space $H$.  Define a  projection $P : B_{\Rdb}(H) \to B_{\Cdb}(H)$ by $P(T)(x) = \frac{1}{2} (T(x) - i T(ix))$.   It is an exercise to check that 
this is well defined and completely contractive.    It follows that $X$ is real completely isometrically embedded and complemented in the real injective space 
$B_{\Rdb}(H)$, and so is real injective.    Note that $B_{\Rdb}(H)$ is real injective since we may view $H$ as the real Hilbert space $\ell^2_{\Cdb}(I) \cong \ell^2_{\Rdb}(I \times \{ 1, 2 \})$. 

 A  real injective complex  operator subspace $X \subset B_{\Cdb}(H)$ for a complex Hilbert space $H$,
 is real linear completely contractively complemented in $B_{\Cdb}(H)$ via a  projection $P : B_{\Cdb}(H) \to X$.
 Define  $Q(x) = \frac{1}{2} (P(x) - i P(ix))$, which is  completely contractive and maps into $X$.   
 We have $Q(ix) = i Q(x)$, so that $Q$ is complex linear.
 Note that $Qx  = \frac{1}{2} (x - i(ix)) = x$ for $x \in X$, so $Q$ is a projection.
  It follows that $X$ is complex completely isometrically embedded and complemented in the complex injective space 
$B_{\Cdb}(H)$, and so is complex injective.    \end{proof}

There is no need to state real operator system versions of the above results, since an operator system ${\mathcal S}$ is injective in the category of operator systems and (necessarily selfadjoint--see \cite[Lemma 2.3]{BT}) UCP maps 
if and only if it is injective in the category of operator spaces and completely contractive maps.   Indeed if ${\mathcal S}$ is an operator system in $B(H)$ and is injective as an operator system 
(resp.\ space) 
then its complementability in $B(H)$ ensures that it is injective as an operator space (resp.\ system).  We are using the fact mentioned 
in the introduction that  a  map on a unital real or complex $C^*$-algebra is UCP  if and only if it is 
completely contractive and unital.   Indeed injective operator systems  have a $C^*$-algebra structure
(see e.g.\ \cite[Proposition 4.4]{ROnr}, and \cite[Theorem 4.9]{Sharma} in the real case), and so can be represented as a $C^*$-subalgebra of $B(H)$.

 For a real operator space $X$ the next  result, like many results involving the injective envelope, 
 may also be proved by first proving the result in the case that $X$ is an operator system,
 and second,  applying the first case to the 
 Paulsen operator system ${\mathcal S}(X)$.  See  for example \cite{BP}, or in  4.4.2 of \cite{BLM} or Chapter 16 of \cite{Pau}, and its applications in those 
sources.     We also remark that some parts of this result and the next are generalized later for $G$-spaces, however we do the `base case first' because of its 
importance and clarity,  
and also to give the model that makes the later proofs quicker to grasp.

\begin{theorem} \label{ijco}   Let ${\mathcal S}$ be a  real operator space with injective envelope $(I({\mathcal S}), j)$.
\begin{itemize}
\item [(1)]  $I({\mathcal S})_c = I({\mathcal S}_c)$ (that is, $(I_{\Rdb}({\mathcal S})_c, j_c)$ is a complex  injective envelope of ${\mathcal S}_c$).
\item [(2)]   If ${\mathcal S}$ is
a real operator system or unital real  operator space  then  $I({\mathcal S})$ may be taken to be a 
real $C^*$-algebra, indeed a real $C^*$-subalgebra of its  $C^*$-algebra complexification, 
which is an injective envelope of ${\mathcal S}_c$.  Also $j$ may be taken to be  unital.
\item [(3)] If ${\mathcal S}$ is an approximately unital real  operator algebra
(or an approximately unital real Jordan operator algebra) then the conclusions in the first sentence of  {\rm (2)} hold, but  in addition 
   $j$ may be taken to be a homomorphism. 
\end{itemize} 
\end{theorem}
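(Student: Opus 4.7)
The plan is to prove (1) directly and reduce (2) and (3) to the unital case via the real Choi--Effros theorem and complexification. For (1), the goal is to show $(I({\mathcal S})_c, j_c)$ is a complex injective envelope of ${\mathcal S}_c$. Complex injectivity of $I({\mathcal S})_c$ follows from Ruan's result recalled just before Lemma \ref{lem1} combined with Lemma \ref{lem1}, and $j_c$ is a complex linear complete isometry from the complexification construction of Section 2. The real substance is proving complex rigidity.

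Fix a complex linear complete contraction $u : I({\mathcal S})_c \to I({\mathcal S})_c$ with $u \circ j_c = j_c$, and let $P = \tfrac{1}{2}(I + \theta_{I({\mathcal S})})$ be the completely contractive real linear projection of $I({\mathcal S})_c$ onto $I({\mathcal S})$. Then $P \circ u|_{I({\mathcal S})}$ is a real complete contraction $I({\mathcal S}) \to I({\mathcal S})$ extending the identity on $j({\mathcal S})$, so by real rigidity of $I({\mathcal S})$ it equals $\mathrm{id}_{I({\mathcal S})}$. Hence $u(x) = x + i f(x)$ for all $x \in I({\mathcal S})$ and a unique real linear $f : I({\mathcal S}) \to I({\mathcal S})$. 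Applying the same observation to the complex complete contractions $u^n$ (which also fix $j_c({\mathcal S}_c)$) gives $u^n(x) = x + i f_n(x)$, and expanding
\[
u^{n+1}(x) \;=\; u(x + i f_n(x)) \;=\; x - f(f_n(x)) + i(f(x) + f_n(x))
\]
and comparing real and imaginary parts forces $f^2 = 0$ and $f_{n+1} = f + f_n$, so $f_n = n f$. Since the ``imaginary part'' map $z \mapsto P(-iz)$ on $I({\mathcal S})_c$ is completely contractive, $n \|f(x)\| \le \|x + n i f(x)\| = \|u^n(x)\| \le \|x\|$ for every $n$, forcing $f \equiv 0$ and hence $u = \mathrm{id}$ on $I({\mathcal S})$ and, by complex linearity, on $I({\mathcal S})_c$.

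For (2), when ${\mathcal S}$ is unital the standard construction produces $(I({\mathcal S}), j)$ with $j$ unital, and a unital real injective operator space is a real $C^*$-algebra by the real Choi--Effros theorem \cite[Theorem 4.9]{Sharma}. Its $C^*$-algebra complexification has underlying operator space $I({\mathcal S})_c$, which by (1) is $I({\mathcal S}_c)$; uniqueness of the Choi--Effros product on an injective unital operator space guarantees the two complex $C^*$-algebra structures on $I({\mathcal S}_c)$ coincide. For (3), I would pass to the (associative or Jordan) unitization, apply (2) to obtain a real $C^*$-algebra containing ${\mathcal S}$, and then invoke the complex analogue for ${\mathcal S}_c$ (see e.g.\ \cite[Section 4.3]{BLM}): $j_c : {\mathcal S}_c \to I({\mathcal S}_c) = I({\mathcal S})_c$ is a (Jordan) homomorphism, and its restriction to ${\mathcal S}$ lands in the real part $I({\mathcal S})$ and is a real (Jordan) homomorphism.

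The main obstacle I anticipate is the rigidity step in (1): the projection-onto-real-part argument alone only pins down the real part of $u$, leaving open that $u$ might differ from the identity by an imaginary correction. The trick that closes this gap is to iterate $u$ and observe that the correction $f$ accumulates linearly in $n$ (since $f^2 = 0$), which is incompatible with the $u^n$ being complete contractions. A secondary concern is the bookkeeping in (3)---verifying that the Choi--Effros product on $I({\mathcal S}_c)$ restricts correctly to $I({\mathcal S})$ and that the Jordan case can be handled uniformly with the associative one---but this is largely mechanical once the uniqueness in (2) and the Jordan analogues in the literature are invoked.
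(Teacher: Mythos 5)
Your part (1) is correct but proves rigidity by a genuinely different device than the paper. The paper averages: given a complex linear complete contraction $u$ on $I({\mathcal S})_c$ fixing ${\mathcal S}_c$, it forms $R = \tfrac{1}{2}(u + \theta \circ u \circ \theta)$, which is $\Zdb_2$-equivariant and hence of the form $v_c$ by Corollary \ref{recc}; real rigidity gives $R = I$, and then the fact that the identity is an extreme point of the unit ball of a unital Banach algebra forces $u = I$. You instead use real rigidity only to pin down the real part of $u|_{I({\mathcal S})}$, writing $u(x) = x + if(x)$, and then kill $f$ by iteration: comparing real and imaginary parts of $u^{n+1}$ gives $f^2 = 0$ and $u^n(x) = x + inf(x)$, which is incompatible with every $u^n$ being a complete contraction unless $f = 0$. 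Both arguments are sound; yours trades the extreme-point fact for the nilpotency trick and is, if anything, more self-contained. Your part (2) is essentially the paper's argument (injective envelope of the generated operator system, Choi--Effros structure, compatibility with the complexification).

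Part (3), however, has a genuine gap. You ``pass to the unitization and apply (2),'' but that produces $I({\mathcal S}^1)$, an injective envelope of ${\mathcal S}^1$ --- whereas the theorem asserts properties of $I({\mathcal S})$, the injective envelope of the nonunital algebra ${\mathcal S}$ itself. The identification $I({\mathcal S}) = I({\mathcal S}^1)$ is precisely the nontrivial content of (3), and it is where the paper spends its effort: one represents ${\mathcal S}$ so that a partial cai $(e_t)$ converges SOT to $1$, realizes $I({\mathcal S})$ as the range of a completely contractive projection $\Phi$ on $B(H)$ fixing ${\mathcal S}$, and proves $\Phi(I) = I$ via the functional $\varphi = \langle \Phi(\cdot)\zeta,\zeta\rangle$, the norm identity $\Vert \varphi \Vert = \Vert \varphi_{|{\mathcal S}}\Vert$ forced by $\varphi(e_t)\to 1$, the real-case fact \cite[Lemma 4.11]{BT} giving $\varphi(I)=1$, and the converse of Cauchy--Schwarz. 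This step cannot simply be imported from the complex case: as the paper emphasizes, arguments involving states and positivity do not transfer automatically to the real setting, which is exactly why the specific lemmas of \cite{BT} are invoked. Without establishing $I({\mathcal S}) = I({\mathcal S}^1)$, your plan only embeds ${\mathcal S}$ into some injective real $C^*$-algebra; it does not show that the injective envelope of ${\mathcal S}$ carries the claimed $C^*$-structure with $j$ a homomorphism.
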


\begin{proof}  (1)\ Let  $Z = I({\mathcal S})$.  
To see that 
$(I({\mathcal S})_c, j_c)$ is an injective envelope of ${\mathcal S}_c$, by the lines above Lemma \ref{lem1} we have that  $I({\mathcal S})_c$ is an 
injective extension  of ${\mathcal S}_c$.  
To see that it is rigid, suppose that $u : Z_c \to Z_c$ is a (complex linear) complete contraction  
extending $I_{{\mathcal S}_c}$.    Then $R = \frac{1}{2} (u + (\theta_Z \circ u  \circ \theta_Z))$  is a $\Zdb_2$-equivariant (as in Corollary \ref{recc})  complex linear complete contraction $v$ 
 extending $I_{{\mathcal S}_c}$. 
 Thus $R = v_c$ by the just cited Corollary \ref{recc}, for a real linear complete contraction on $I({\mathcal S})$ extending $I_{{\mathcal S}}$. 
  By rigidity we have $v = I$ and so $R = I$.  Now the identity is well known to be an extreme point of the unit ball of any unital (real or complex) Banach algebra 
(see e.g.\ \cite[Corollary 2.1.42]{Rod}, although there is a well known simple and direct Krein-Milman argument). 
Hence $u = I$.  
Thus $I({\mathcal S})_c$ is an injective and rigid extension  of ${\mathcal S}_c$, and hence it is
an injective  envelope of ${\mathcal S}_c$.

(2)\ If ${\mathcal S}$ is 
a real operator system or unital real  operator space, then as in the complex case $I({\mathcal S})$ may be taken to be a real unital $C^*$-algebra $A$ with $j(1) = 1$ (see 
\cite[Theorem 4.9]{Sharma} and the proof of \cite[Corollary 4.2.8 (1)]{BLM}).  The $C^*$-algebra  $(A_c,j_c)$ is a complexification of $A$ with the same identity, 
and $I({\mathcal S})$ is  a real $C^*$-subalgebra.   If ${\mathcal S}$ is 
a real unital operator algebra then as in the complex case \cite[Corollary 4.2.8 (1)]{BLM} we may further take $j$ to be a homomorphism. 

(3)\  Let $A$ be an approximately unital real (Jordan) operator algebra, represented completely isometrically as a  Jordan subalgebra of $B(H)$.     
Its unitization $A^1 = A + \Rdb I_H$ is (as an operator space and (Jordan)  algebra) independent of $H$ by \cite[Lemma 4.10]{BT}.
Using \cite[Proposition 4.3]{BT} 
and  taking a compression, we may assume that a partial cai $(e_t)$ of $A$ has SOT limit $1$. 
Then $I(A)$ may be identified with the range of a  completely contractive projection  $\Phi$  on $B(H)$ fixing $A$.  Let $\varphi = 
\langle \Phi(\cdot) \zeta, \zeta \rangle$, where $\| \zeta \| = 1$.  As in 
the proof of \cite[Corollary 4.2.8]{BLM} this is a contractive functional on $A + \Rdb I$, and the fact that $\varphi(e_t) \to 1$ implies that 
$\| \varphi \| = \| \varphi_{|A} \|$.   This implies by  \cite[Lemma 4.11]{BT} that $\varphi(I) = \langle \Phi(I) \zeta, \zeta \rangle = 1.$  
By the converse to the Cauchy-Schwarz inequality, $\Phi(I) \zeta = \zeta$.  So $\Phi(I) = I$. 
This implies as in \cite[Corollary 4.2.8]{BLM}  that $(I(A^1),j)$ is a $C^*$-algebra and injective envelope of $A$,
 and $j$ is a homomorphism.   By the unital case above, $I(A)$ is a unital real $C^*$-subalgebra of 
$I(A^1_c)$ and we have $I(A)_c = I(A^1)_c = I(A^1_c) = I(A_c)$ as real $C^*$-algebras.
 \end{proof}

\begin{corollary} \label{tocenv} Let $A$ be  a real operator system, unital operator space, or approximately unital real  operator algebra
(or Jordan operator algebra), with $I(A)$ taken to be a $C^*$-algebra as in  Theorem {\rm \ref{ijco}}.  
\begin{itemize}
\item [(1)]  If $C^*_e(A)$ is the $C^*$-subalgebra of $I(A)$ generated by $A$
then $C^*_e(A)_c = C^*_e(A_c)$.    
\item [(2)]   $C^*_e(A)$ has the universal  property expected 
of the $C^*$-envelope:  given any unital complete isometry 
 $j :  A \to D$ into a real $C^*$-algebra $D$ such that $j(A)$ generates $D$ as a real $C^*$-algebra,
 there exists a (necessarily unique and surjective) $*$-epimorphism $\pi : D \to C^*_e(A)$  such that $\pi \circ j$ is the canonical inclusion of $A$ in 
 $C^*_e(A)$.   
 \end{itemize} 
 \end{corollary}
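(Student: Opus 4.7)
The plan is to reduce (1) to Theorem \ref{ijco}(1) via a straightforward generation argument, and to prove (2) by complexifying, invoking the complex $C^*$-envelope universal property, and descending via $\Zdb_2$-equivariance.

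For (1), I will show that $C^*_e(A)_c$, viewed inside $I(A)_c = I(A_c)$, is exactly the complex $C^*$-subalgebra generated by $A_c$. On the one hand it is a complex $C^*$-subalgebra containing $A_c = A + iA$. On the other hand, any complex $C^*$-subalgebra of $I(A_c)$ containing $A_c$ in particular contains $A$, so contains the real $C^*$-subalgebra $C^*_e(A)$ of $I(A)$, and hence contains $C^*_e(A) + i\, C^*_e(A) = C^*_e(A)_c$. The standard complex theory then identifies the complex $C^*$-subalgebra of $I(A_c)$ generated by $A_c$ as $C^*_e(A_c)$.

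For (2), given $j : A \to D$ as in the statement, I will first complexify to $j_c : A_c \to D_c$, a unital complete isometry into the complex $C^*$-algebra $D_c$. The same generation argument used in (1) shows that $j_c(A_c)$ generates $D_c$ as a complex $C^*$-algebra. The complex universal property of $C^*_e(A_c) = C^*_e(A)_c$ then yields a unique $*$-epimorphism $\tilde{\pi} : D_c \to C^*_e(A)_c$ with $\tilde{\pi} \circ j_c$ equal to the canonical inclusion $\iota : A_c \to C^*_e(A)_c$.

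The key step will be to verify that $\tilde{\pi}$ is $\Zdb_2$-equivariant, i.e.\ $\theta_{C^*_e(A)_c} \circ \tilde{\pi} = \tilde{\pi} \circ \theta_{D_c}$, via a uniqueness trick. The map $\rho := \theta_{C^*_e(A)_c} \circ \tilde{\pi} \circ \theta_{D_c}$ is again a $\Cdb$-linear $*$-homomorphism $D_c \to C^*_e(A)_c$ (conjugate-linear $\circ$ linear $\circ$ conjugate-linear is linear, and each factor preserves products and $*$). Since both $j_c$ and $\iota$ are complexifications of real-linear maps, Corollary \ref{recc} gives that each intertwines the relevant $\theta$-actions, so $\rho \circ j_c = \iota$ as well. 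Uniqueness in the complex universal property forces $\rho = \tilde{\pi}$, giving the equivariance. Consequently $\tilde{\pi}$ sends the fixed-point space $D$ of $\theta_{D_c}$ into the fixed-point space $C^*_e(A)$ of $\theta_{C^*_e(A)_c}$, and $\pi := \tilde{\pi}|_D$ is the desired real $*$-homomorphism with $\pi \circ j$ the inclusion. Surjectivity follows from surjectivity of $\tilde{\pi}$ and the direct-sum decomposition $C^*_e(A)_c = C^*_e(A) \oplus i\, C^*_e(A)$: any $z \in C^*_e(A)$ equals $\tilde{\pi}(d_1 + i d_2)$ for some $d_1, d_2 \in D$, and comparing real parts gives $z = \pi(d_1)$. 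Uniqueness of $\pi$ follows by complexifying any alternative candidate and invoking the complex uniqueness. The main obstacle is the equivariance step, but once the uniqueness trick is set up it is routine.
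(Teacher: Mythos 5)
Your proof is correct and follows essentially the same route as the paper: part (1) is the generation argument inside $I(A)_c = I(A_c)$ supplied by Theorem \ref{ijco}, and part (2) is obtained by complexifying and descending (the paper simply says it ``follows immediately from the complex case by complexification,'' and your $\Zdb_2$-equivariance-via-uniqueness trick together with the real/imaginary part comparison for surjectivity is the standard way to make that one line precise). The only detail you gloss over is the approximately unital nonunital case, where $C^*_e(A_c)$ is by definition the $C^*$-algebra generated by $A_c$ inside $C^*_e((A_c)^1)$, so one must first invoke $(A^1)_c = (A_c)^1$ and the identification of $I(A)$ with $I(A^1)$, as the paper does, before your generation argument applies verbatim.
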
 

\begin{proof}    
  (1)\ The real operator system and unital operator space result here follows easily from Theorem \ref{ijco} (1) and (2).   
 It also follows from the theory of involutions similarly to
 e.g.\ \cite[Theorem 2.4 or Theorem 2.8]{BWinv} or \cite[Proposition 1.16]{BKM}.  
 It will also be generalized in Corollary \ref{tocenvG}. 
In the approximately unital case we know $(A^1)_c = (A_c)^1$ by the uniqueness of unitization (e.g.\ \cite[Lemma 4.10]{BT}).
By the proof  in Theorem \ref{ijco}, $I(A) \subset I(A_c)$ are unital $C^*$-algebras and agree
with the  injective envelope of $A^1$ and $(A_c)^1$.  In  the complex case (4.3.4 in \cite{BLM}), 
  $C^*_e(A_c)$ is defined to be the $C^*$-algebra generated by $A_c$ inside 
$C^*_e(A^1_c)$.   Since $$C^*_e(A^1_c) = C^*_e(A^1)_c  = C^*_e(A^1) + i \, C^*_e(A^1) \subset I(A^1)_c = I(A)_c,$$ we have $C^*_e(A_c) = C^*_e(A) + i \, C^*_e(A)
 = C^*_e(A)_c$.

(2)\  This is already mentioned in \cite[Section 4]{Sharma}, and also follows immediately from the complex case by complexification.  
 \end{proof} 

{\bf Remark.}  For a nonunital real operator algebra $A$ with no kind of approximate identity one defines 
$C^*_e(A)$ to be the $C^*$-algebra generated by $A$ inside 
$C^*_e(A^1)$.  Then as in \cite[Proposition 4.3.5]{BLM} it is easy to check that 
 $C^*_e(A)$ has the universal  property of the $C^*$-envelope:  given any unital completely isometric homomorphism  
 $j :  A \to D$ into a real $C^*$-algebra $D$ such that $j(A)$ generates $D$ as a real $C^*$-algebra,
 there exists a (necessarily unique and surjective) $*$-epimorphism $\pi : D \to C^*_e(A)$  such that $\pi \circ j$ is the canonical inclusion of $A$ in 
 $C^*_e(A)$.  Again we use the uniqueness of unitization (e.g.\  \cite[Lemma 4.10]{BT}).

    \bigskip

%\section{
We now turn to the more general setting of $G$-operator spaces.   The reader who only cares about real spaces for example may take $G = (0)$.

For a given group $G$, a $G$\textit{-set} refers to a given action on a set $X$. Given two $G$-sets $X\text{ and }Y$, a \textit{$G$-equivariant map} (or \textit{$G$-map}) is a map $f:X\to Y$ such that $$f(gx)=gf(x),\quad\text{ for all }g\in G\text{ and }x\in X.$$
Henceforth let $G$ be a discrete group (although as we said in the Introduction most of the results in this section hold in far greater generality). 
 As in  Hamana's notion of $G$-module from \cite{Hamiecds,Hamiods} we define a real $G$-{\em operator space} (resp.\ $G$-{\em operator system}) to be a real operator space (resp.\  operator system) $X$ such that 
 $G \acts X$ by real linear surjective complete isometries (resp.\   unital complete order real linear isomorphisms).   Sometimes we 
 write these complete isometries as $u_g : X \to X$ for $g \in G$.   
 We say that a complex operator space $X$ is a {\em complex} $G$-{\em operator space} if $G\acts X$ by complex  linear surjective complete isometries.
A    $G$-{\em unital operator space} is a unital operator space and a $G$-operator space with  a $G$-action by  linear surjective unital complete isometries.
 We remark that in \cite{Hamiods} Hamana studies a class of complex operator spaces much more general than the complex $G$-operator spaces and works out the injective and ternary 
 envelope theory for these.  
 
 We recall that a (real or complex) $G$-$C^*$-algebra is a (real or complex)  $C^*$-algebra $B$ such that 
 $G \acts B$ by (real or complex)  $*$-automorphisms.   Clearly a $G$-$C^*$-algebra (resp.\ unital $G$-$C^*$-algebra) is 
 a $G$-operator space (resp.\ $G$-operator system).  Conversely a  $G$-operator system which is a $C^*$-algebra is a $G$-$C^*$-algebra.
 Indeed the `operator system 
$G$-action' on a $C^*$-algebra is necessarily by $*$-automorphisms.
This is because of the operator space version of the Kadison-Banach-Stone theorem that we have seen several times before in the complex case:
a unital complete isometry between $C^*$-algebras is a $*$-homomorphism.  
The real case of this version  of the Kadison-Banach-Stone theorem follows immediately 
from the complex version by complexification (see also \cite[Theorem 4.4]{RComp}). 
Similarly, we define a $G$-TRO to be  a (real or complex)  TRO  $Z$ such that 
 $G \acts Z$ by (real or complex)  ternary automorphisms (see also \cite[Section 4]{Hamiods}).   By the facts about TRO's mentioned in the introduction, 
  a $G$-TRO  is 
 a $G$-operator space.  Conversely a  $G$-operator space which is a TRO is a $G$-TRO.
 Indeed the `operator space $G$-action' on a TRO is necessarily by ternary automorphisms, by the just cited facts.

\begin{lemma} \label{Gcom} If $X$ is a  real $G$-operator space (resp.\ real $G$-operator system) then $X_c$ with action $g(x+iy) = gx + i gy$ is a complex $G$-operator space (resp.\  complex $G$-operator system), $\theta_X$ is $G$-equivariant, and the canonical projection
$X_c \to X$ is a real linear $G$-equivariant  completely contraction (resp.\ UCP).  Indeed the $G$-action on $X_c$ commutes with the $\Zdb_2$-action mentioned above Proposition {\rm \ref{chco}}.
\end{lemma}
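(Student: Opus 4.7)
The plan is to verify each of the three assertions as a routine consequence of the functoriality of complexification established in Section 2, together with the characterization via $\theta_X$ from Proposition \ref{chco}.

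For the first assertion, I would observe that the formula $g(x+iy) = gx + i(gy)$ defines, for each $g \in G$, precisely the complexified map $(u_g)_c : X_c \to X_c$ of the real linear surjective complete isometry $u_g : X \to X$. By Theorem \ref{crcb}, the operation $T \mapsto T_c$ is a complete isometry $CB_{\Rdb}(X,X) \to CB_{\Cdb}(X_c, X_c)$, so $(u_g)_c$ is a complex linear complete contraction with $\|(u_g)_c\|_{\mathrm{cb}} = \|u_g\|_{\mathrm{cb}} = 1$. The identity $(u_g)_c \circ (u_h)_c = (u_{gh})_c$, obtained by comparing both sides on $X$ and invoking complex linearity (or directly from Corollary \ref{recc}), shows that $g \mapsto (u_g)_c$ is a group homomorphism; since $(u_g)_c$ and $(u_{g^{-1}})_c$ are then mutually inverse complete contractions, each is a complete isometry. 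In the operator system case, $u_g$ is UCP (being a real linear unital complete isometry between real operator systems, as recalled in the Introduction), and hence $(u_g)_c$ is UCP by the fact in the Introduction about complexifying CP maps (\cite[Lemma 2.3]{BT}). Thus $X_c$ is a complex $G$-operator space (resp.\ $G$-operator system).

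For the second assertion, $G$-equivariance of $\theta_X$ is the direct calculation
$$\theta_X(g(x+iy)) = \theta_X(gx+i(gy)) = gx - i(gy) = g(x-iy) = g\theta_X(x+iy),$$
for all $g \in G$ and $x, y \in X$. Since the $\Zdb_2$-action on $X_c$ noted above Proposition \ref{chco} is generated by $I_{X_c}$ and $\theta_X$, this commutation yields the final clause of the statement.

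For the third assertion, I would write the canonical projection $P : X_c \to X$ sending $x + iy$ to $x$ as $P = \tfrac{1}{2}(I_{X_c} + \theta_X)$. This is real linear (not complex linear, since $\theta_X$ is conjugate linear) and completely contractive because $\theta_X$ is a complete isometry. Its $G$-equivariance is immediate from the equivariance of both summands, or by the direct computation $P(g(x+iy)) = gx = g P(x+iy)$. In the operator system case, $\theta_X$ is a conjugate linear unital $*$-automorphism of $X_c$ (a brief check: $\theta_X$ preserves the involution and multiplication of the complexification), so it preserves positivity at each matrix level; hence $P$ is UCP. The main "obstacle" here is not conceptual but simply keeping real linear, complex linear, and conjugate linear notions straight; everything else reduces to functoriality of $T \mapsto T_c$ and the period-$2$ property of $\theta_X$.
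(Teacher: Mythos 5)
Your proposal is correct and follows essentially the same route as the paper, which simply notes that the action on $X_c$ is $g \mapsto (u_g)_c$ (a surjective complete isometry) and that the canonical projection is completely contractive and unital, hence UCP in the system case; you have merely written out the details. The only cosmetic difference is that for the UCP claim you argue via positivity-preservation of $\theta_X$, whereas the paper invokes the fact that a unital complete contraction between real operator systems is automatically completely positive.
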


\begin{proof}   This is essentially just the fact that $(u_g)_c : X_c \to X_c$ is a surjective complete isometry.  
In the system case the canonical projection
$X_c \to X$ is  completely contractive and unital, hence UCP.
 \end{proof} 

{\bf Remarks.} 1)\   If $X, Y$ are  real $G$-operator spaces (resp.\ real $G$-operator systems) and $T : X \to Y$ is
$G$-equivariant then so is $T_c$.

\smallskip

2)\   A simple but important fact is that 
if $X$ and $Y$ are real $G$-operator spaces, then a $G$-equivariant 
$T \in CB_{\Cdb}(X_c,Y_c)$ equals $S_c$ for  a $G$-equivariant $S \in CB_{\Rdb}(X,Y)$ if and only if $T$ is 
$\Zdb_2$-equivariant (as defined above Proposition {\rm \ref{chco}}).   

One can restate many of our definitions and results in this paper in terms of the $\Zdb_2$-action discussed above Proposition {\rm \ref{chco}}.  For example,  the maps $P$ and $Q$ in Lemma \ref{lem1} (and elsewhere in our paper)
 may be viewed as an average with respect to the $\Zdb_2$-action.
Or, Corollary  \ref{recc}  may be stated as saying that $T \in CB_{\Cdb}(X_c,Y_c)$ equals $S_c$ for $S \in CB_{\Rdb}(X,Y)$ if and only if  $T$ is $\Zdb_2$-equivariant.

\smallskip

3)\ 
One can often make a $G$-map out of an arbitrary linear map by ``averaging".
For example suppose that $G$ is a finite group, and $X$ and $Y$ are vector spaces, which are also $G$-sets, and $T:X\to Y$ is linear. Define  $\tilde{T}:X\to Y$ by $$\tilde{T}(x)=\frac{1}{| G |}\sum_{g\in G}g^{-1}T(gx),$$ for  $x\in X$. Then $\tilde{T}$ is  linear  and $G$-equivariant.

\bigskip

 Hamana proves in  \cite{Hamiecds}  and \cite{Hamiods}  that injective envelopes and ternary envelopes
 exist in his categories of  (complex) $G$-modules and $G$-morphisms.
  This theory parallels the usual injective envelope theory, with 
 $G$-{\em rigid} and $G$-{\em essential} playing their requisite roles just as before.  (Note for example that $G$-rigid is defined just as in the definition of rigid, but with all morphisms
$G$-equivariant.) The same theory also exists with the same proofs for real  
 $G$-operator spaces (resp.\ real $G$-operator systems, complex $G$-operator spaces), where the morphisms for real or complex $G$-operator spaces
 (resp.\ $G$-operator systems) are $G$-equivariant real or complex linear complete contractions (resp.\   UCP maps).  We call these the
 $G$-{\em morphisms}.  See \cite{CeccoTh} for a systematic development of this theory in very many categories.
  Hamana's  complex $G$-modules in  \cite{Hamiecds}  are the complex $G$-operator systems in this notation. 
 See \cite{KK} and its sequels for more modern usage of the $G$-injective envelope. 
 
 \begin{theorem} \label{ath} {\rm \cite{Hamiecds,Hamiods, CeccoTh}} \ 
	Every real  or complex $G$-operator space (resp.\ $G$-operator system) $X$ has a real  or complex $G$-injective envelope, written $(I_{G}(X),\kappa)$ (in the category of real  or complex  
 $G$-operator spaces (resp.\ $G$-operator systems).  Here 
 $\kappa : X \to  I_{G}(X)$ is a real or complex linear $G$-equivariant complete isometry (resp.\ unital complete order embedding).
 This $G$-injective envelope is unique in the sense that for any  $G$-injective envelope $(Z, \lambda)$ of $V$ in  the
  categories above, there is a $G$-isomorphism $\psi: I_{G}(V) \rightarrow Z$ 
 (that is, $\psi$ and $\psi^{-1}$ are morphisms in the category) with $\psi \circ \kappa=\lambda$.   
\end{theorem}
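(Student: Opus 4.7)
The plan is to follow Hamana's strategy from \cite{Hamiecds, Hamiods}, adapted simultaneously to the real case and to the operator space (not merely operator system) setting, reducing the operator space case to the operator system case via the Paulsen trick.

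First I would establish the existence of enough $G$-injectives. Given a completely isometric embedding $X \subset B(H)$ (with $H$ real or complex as appropriate), I consider the space $\ell^\infty(G, B(H))$ equipped with the left regular $G$-action $(g \cdot f)(h) = f(g^{-1}h)$, together with the $G$-equivariant complete isometry $X \to \ell^\infty(G, B(H))$ given by $x \mapsto (g \mapsto g^{-1}x)$. Because $B(H)$ is injective for real or complex Hilbert spaces, and extension problems for $G$-equivariant maps into $\ell^\infty(G, B(H))$ reduce pointwise to extension problems into $B(H)$, the target is $G$-injective. In the operator system case the analogous construction with unital $C^*$-subalgebras of $B(H)$ embeds $X$ unitally and completely order isomorphically inside a $G$-injective unital $G$-$C^*$-algebra.

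Next I would apply Hamana's minimal idempotent argument. Fix a $G$-injective $Z$ containing $X$ and let $\mathcal{S}$ be the set of $G$-equivariant completely contractive self-maps $\phi: Z \to Z$ with $\phi|_X = \mathrm{id}_X$ (additionally unital and completely positive in the system case, automatically selfadjoint in the real case by \cite[Lemma 2.3]{BT}). This set is nonempty, closed under composition, and point-weak-$\ast$ compact by Banach--Alaoglu. I would partially order $\mathcal{S}$ by $\phi \preceq \psi$ iff $\|[\phi(z_{ij})]\| \leq \|[\psi(z_{ij})]\|$ for every matrix $[z_{ij}]$ over $Z$, and apply Zorn's lemma to produce a minimal $\phi_0$. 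Since $\phi_0 \circ \phi_0 \in \mathcal{S}$ with $\phi_0 \circ \phi_0 \preceq \phi_0$, minimality forces $\phi_0^2 = \phi_0$. Setting $I_G(X) := \phi_0(Z)$ yields a $G$-invariant completely contractively complemented, hence $G$-injective, subspace of $Z$, and $G$-rigidity is immediate: any $G$-endomorphism of $I_G(X)$ fixing $X$ extends via injectivity to an element of $\mathcal{S}$ that is $\preceq \phi_0$, so equals $\phi_0$ on $Z$, hence is the identity on $I_G(X)$.

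For the operator space (non-system) case I would pass to the Paulsen operator system $\mathcal{S}(X)$ equipped with the induced diagonal $G$-action (trivial on the scalar diagonals, $u_g$ on the off-diagonal corners), form $I_G(\mathcal{S}(X))$ by the system-case result, and extract $I_G(X)$ from the $(1,2)$-corner exactly as in \cite[Section 4.4]{BLM} or \cite[Theorem 4.9]{Sharma}. Uniqueness then follows automatically: given any second $G$-injective envelope $(Z, \lambda)$, $G$-injectivity of each envelope produces $G$-morphisms $\psi: I_G(X) \to Z$ and $\psi': Z \to I_G(X)$ compatible with $\kappa$ and $\lambda$, and $G$-rigidity on both sides forces $\psi' \circ \psi$ and $\psi \circ \psi'$ to be identities. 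The main obstacle I foresee is verifying simultaneously, for both real and complex scalars, that $\mathcal{S}$ is point-weak-$\ast$ closed and that composition respects the seminorm order in the non-system case; the Paulsen trick is what makes this routine, by reducing everything to the cleaner UCP setting where both compactness and composition of $G$-equivariant UCP maps behave well.
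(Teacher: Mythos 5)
Your route is exactly the one the paper takes (the paper itself only sketches this construction, deferring to Hamana \cite{Hamiecds,Hamiods}): embed $X$ $G$-equivariantly in the $G$-injective space $\ell^\infty(G,B(H))$ via $j(x)(s)=s^{-1}x$, run the minimal-seminorm argument over the $G$-equivariant completely contractive maps fixing $j(X)$, and deduce uniqueness from $G$-rigidity by the usual two-arrow diagram. The one place your written argument does not close is the deduction ``$\phi_0\circ\phi_0\preceq\phi_0$, so minimality forces $\phi_0^2=\phi_0$'': minimality in the seminorm order only yields $\|[\phi_0^2(z_{ij})]\|=\|[\phi_0(z_{ij})]\|$ for all matrices over $Z$, which is strictly weaker than $\phi_0^2=\phi_0$. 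The standard repair (Hamana's, and the one in \cite[Chapter 15]{Pau} and \cite[Section 4.2]{BLM}) is to pass to a point-weak* cluster point $\psi$ of the Ces\`aro means $\frac{1}{n}\sum_{k=1}^{n}\phi_0^{k}$; minimality gives $p_\psi=p_{\phi_0}$, while the telescoping estimate $\|\psi(\phi_0(z)-z)\|\leq \lim_n \frac{2\|z\|}{n}=0$ gives $p_{\phi_0}(\phi_0(z)-z)=0$, i.e.\ $\phi_0^2=\phi_0$. The same device is needed for your rigidity claim (``so equals $\phi_0$ on $Z$''), since $p_{u\circ\phi_0}=p_{\phi_0}$ by itself only says $u$ is completely isometric on the range. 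Both repairs stay inside your class $\mathcal{S}$ because Ces\`aro means and point-weak* limits of $G$-equivariant completely contractive (resp.\ UCP) maps are again such; with these two standard insertions your proof coincides with the paper's intended one.
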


We summarize briefly the main ideas, which will also be used below.  For any $X$ in one of the just mentioned categories
 (e.g.\ real $G$-operator spaces), if $X \subset  B(H)$ as an operator subspace (or subsystem), we may define a $G$-action  on $l^\infty(G,X)$ and
 $l^\infty(G,B(H))$ by $(t f)(s) = f(t^{-1} s)$ for $s, t \in G$.  There is also an involution on $l^\infty(G,B(H))$: $f^*(s) = f(s)^*$. 
 We have $G$-equivariant inclusions $$X  \; \; \subset   \; \;  l^\infty(G,X)  \; \; \subset   \; \;   l^\infty(G,B(H)) ,$$ 
 where the first of these is the map $j(x)(s) = s^{-1} x$  for $s \in G, x \in X$.   We sometimes write this $j$ as $j_X$ below.
 It is easy to see that $l^\infty(G,B(H))$ is an injective and  $G$-injective von Neumann algebra, 
 and $l^\infty(G,X)$ is $G$-injective if $X$ is injective (see \cite[Lemma 2.2]{Hamiecds} for the idea of proof).
 The $G$-injective envelope may be constructed as a subspace of $W = l^\infty(G,B(H))$, namely as the range of
 a certain completely contractive projection $\Phi : W \to W$ that fixes $j(X)$.  Using these  ideas (see  \cite[Lemma 2.4]{Hamiecds} 
 and the lines above it) one adapts the usual construction of the injective envelope \cite{BLM,ER,Pau} to
$G$-spaces to obtain Theorem \ref{ath}.   See also e.g.\ \cite{Bry} for an exposition of Hamana's ideas in one particular case. 

Note that if $X$ is a (real or complex) unital $C^*$-subalgebra (resp.\ von Neumann subalgebra,
unital subalgebra, unital subspace, operator subsystem, subTRO)  of $B(H)$ then $l^\infty(G,X)$ is a  unital $C^*$-subalgebra (resp.\ von Neumann subalgebra, unital
subalgebra, unital subspace, operator subsystem, subTRO)  of $l^\infty(G,B(H))$.  It  is easy to see that $j_X : X \to l^\infty(G,X)$ is a unital $*$-homomorphism
(resp.\ normal $*$-homomorphism, unital homomorphism, unital map, UCP map, ternary morphism).  (The von Neumann algebra case of this will not be used in this paper, but
can be deduced from a fact from 1.6.6 in \cite{BLM}.)
 
 As in  \cite[Remark 2.3]{Hamiecds} we have: 
 
 \begin{lemma} \label{Hrem}  Let $X$ be a  real  or complex $G$-operator space (resp.\ real or complex $G$-operator system).
 Then $X$ is   $G$-injective in its given category if and only if $X$ is injective as a real  or complex operator space (resp.\ real or complex operator system) 
 and there is a $G$-morphism $\phi : l^\infty(G,X) \to X$ such that $\phi \circ j = I_X$.
 \end{lemma}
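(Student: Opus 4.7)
The plan is to prove each implication separately.

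For the ``if'' direction, suppose $X$ is injective in the underlying (non-equivariant) category and the $G$-morphism $\phi$ exists. Given $G$-subobjects $Y \subset Z$ and a $G$-morphism $T : Y \to X$, I would first use ordinary injectivity of $X$ to extend $T$ to a (non-equivariant) morphism $g : Z \to X$. The key step is to define $F : Z \to l^\infty(G,X)$ by $F(z)(s) = g(s^{-1} z)$; a direct computation using $(tf)(s) = f(t^{-1}s)$ gives $F(tz)(s) = g(s^{-1}tz) = g((t^{-1}s)^{-1}z) = F(z)(t^{-1}s) = (tF(z))(s)$, so $F$ is $G$-equivariant. The $G$-equivariance of $T$ then yields $F(y)(s) = g(s^{-1}y) = T(s^{-1}y) = s^{-1}T(y) = j_X(T(y))(s)$ for $y \in Y$, so $F$ extends $j_X \circ T$. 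Composing with $\phi$ produces a $G$-morphism $\phi \circ F : Z \to X$ whose restriction to $Y$ is $\phi \circ j_X \circ T = T$, establishing $G$-injectivity of $X$.

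For the ``only if'' direction, assume $X$ is $G$-injective. The map $\phi$ comes for free: $j_X : X \to l^\infty(G, X)$ is a $G$-morphism that is a complete isometry (resp.\ unital complete order embedding), and by $G$-injectivity the identity $I_X : X \to X$ extends along $j_X$ to a $G$-morphism $\phi : l^\infty(G,X) \to X$, automatically satisfying $\phi \circ j_X = I_X$. For the ordinary injectivity of $X$, fix any completely isometric embedding $X \subset B(H)$ and consider the $G$-equivariant composite embedding $X \to l^\infty(G,X) \hookrightarrow l^\infty(G,B(H))$. By $G$-injectivity of $X$ again, $I_X$ extends to a $G$-morphism $\Phi : l^\infty(G,B(H)) \to X$. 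Forgetting the $G$-structure, this realizes $X$ as a completely contractively complemented subobject of $l^\infty(G,B(H))$, which is injective in the ordinary category by the discussion preceding the lemma; hence $X$ is injective in the ordinary category.

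The main point to verify is that the formula $F(z)(s) = g(s^{-1}z)$ in the first direction yields a morphism in the correct category. In the operator space case this is immediate from complete contractivity of $g$ and isometricity of each $s^{-1}$ on $Z$. In the operator system case one must also check that $F$ is unital and completely positive; these follow from choosing $g$ to be UCP (available from injectivity of $X$ in the operator system category) together with the fact that each $s^{-1}$ is a complete order automorphism of $Z$, so positivity is preserved pointwise when passing to $l^\infty(G,X)$. Beyond this bookkeeping, the key insight is simply the averaging-style formula for $F$, which turns an arbitrary extension into a $G$-equivariant one landing in $l^\infty(G,X)$.
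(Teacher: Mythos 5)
Your proof is correct and is essentially the argument the paper has in mind: the paper gives no proof of its own but cites Hamana's Remark 2.3 in \cite{Hamiecds}, whose content is exactly your construction $F(z)(s) = g(s^{-1}z)$ together with the complementation of $X$ in the injective space $l^\infty(G,B(H))$. Your bookkeeping on equivariance, complete contractivity, and the UCP case is all accurate, so there is nothing to add.
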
 

It follows that a $G$-injective real  or complex $G$-operator space $Z$ is a TRO, 
since  injective  operator spaces are TRO's (we recall from  e.g.\ 4.4.2--4.4.3   in   \cite{BLM} and \cite{Sharma} (the paragraph before Theorem 4.13 there) that  
the $1$-$2$-corner $p I({\mathcal S}(X)) p^\perp$ may be taken to be a TRO and an injective envelope of $X$). 
Hence $Z$ is a $G$-TRO by the lines above Lemma \ref{Gcom}. 
%avg}.

 As mentioned, $G$-{\em rigidity} and $G$-{\em essentiality} play their requisite roles just as before: An injective $G$-extension is a $G$-injective envelope if and only if it is a $G$-rigid extension, and  if and only if it is $G$-essential extension.   This follows a well-trodden route,  see also \cite{CeccoTh} for precise details if needed in certain particular categories. 
For a $G$-operator system ${\mathcal S}$ one may take the $G$-injective envelope to be a unital $G$-$C^*$-algebra (this is done as usual with the Choi-Effros product, as
in  the lines above \cite[Theorem 2.5]{Hamiecds}).  

%\medskip
\begin{lemma} \label{Hreml}  Let $X$ be a  real  $G$-operator space (resp.\  $G$-operator system, unital $G$-operator space).
 Then $l^\infty(G,X)_c =  l^\infty(G,X_c)$ via a completely isometric complex $G$-morphism  $\kappa$ (that is, 
 a $G$-isomorphism) with
 $\kappa \circ (j_X)_c = j_{X_c}$.
 \end{lemma}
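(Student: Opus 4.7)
The plan is to invoke Ruan's unique complexification theorem (Theorem \ref{rcth}) via the characterization in Proposition \ref{chco}. The underlying reason this should work is that both the complexification functor and the $l^\infty(G,-)$ functor are ``pointwise'' in nature, so one expects them to commute naturally. Specifically, I would exhibit $l^\infty(G, X_c)$ as a reasonable operator space complexification of $l^\infty(G, X)$, and then appeal to uniqueness to produce $\kappa$.

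To carry this out, first define a candidate conjugate linear period $2$ map $\tilde{\theta} : l^\infty(G, X_c) \to l^\infty(G, X_c)$ pointwise by $(\tilde{\theta}(f))(s) := \theta_X(f(s))$ for $f \in l^\infty(G, X_c)$ and $s \in G$. Since $\theta_X$ is a conjugate linear period $2$ complete isometry of $X_c$, and since the matrix norms on $l^\infty(G, X_c)$ are given by $\|[f_{ij}]\|_n = \sup_{s \in G} \|[f_{ij}(s)]\|_n$, the map $\tilde{\theta}$ inherits all of these properties; its fixed point set is precisely $\{f : f(s) \in X \text{ for every } s \in G\}$, which is canonically the real operator subspace $l^\infty(G, X) \subset l^\infty(G, X_c)$. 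Proposition \ref{chco} then identifies $l^\infty(G, X_c)$ as a reasonable complexification of $l^\infty(G, X)$, and Theorem \ref{rcth} yields a unique surjective complex linear complete isometry $\kappa : l^\infty(G, X)_c \to l^\infty(G, X_c)$ restricting to the identity on $l^\infty(G, X)$; explicitly, $\kappa(f + ig)(s) = f(s) + i\, g(s)$.

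It then remains to check $G$-equivariance, the compatibility $\kappa \circ (j_X)_c = j_{X_c}$, and preservation of any additional structure in the system or unital cases. The pointwise $G$-action on $l^\infty(G, X_c)$ and the complexified $G$-action on $l^\infty(G, X)_c$ agree on the real subspace $l^\infty(G, X)$, and $\kappa$ is the complex linear extension of the identity there, so $\kappa$ intertwines the two actions and is a $G$-isomorphism. For the second point, evaluation at $x + iy \in X_c$ and $s \in G$ gives $\kappa((j_X)_c(x + iy))(s) = j_X(x)(s) + i\, j_X(y)(s) = s^{-1}x + i\, s^{-1}y = j_{X_c}(x + iy)(s)$. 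In the operator system or unital operator space case, $\tilde{\theta}$ additionally preserves the constant function $1$ (since $\theta_X$ fixes the unit of $X_c$), so $\kappa$ is unital; a unital complete isometry between real operator systems is then automatically a complete order isomorphism by the standard fact recalled in the introduction (unital completely contractive maps between operator systems are UCP). No genuine obstacle appears; the only subtlety is keeping track of the matrix norms on $l^\infty(G, -)$, but the sup formula makes this transparent, and once Proposition \ref{chco} applies, the entire conclusion falls out of Ruan's uniqueness theorem.
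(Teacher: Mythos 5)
Your proposal is correct and follows essentially the same route as the paper: the pointwise map $\tilde{\theta}(f)(s) = \theta_X(f(s))$ is a conjugate linear period $2$ completely isometric $G$-equivariant automorphism of $l^\infty(G,X_c)$ with fixed points $l^\infty(G,X)$, so Proposition \ref{chco} and Ruan's uniqueness theorem give the explicit $\kappa(f+ig)(t)=f(t)+ig(t)$, and the identity $\kappa\circ (j_X)_c = j_{X_c}$ is verified by the same direct evaluation. Your write-up simply makes explicit some details (the sup formula for the matrix norms, the $G$-equivariance of $\kappa$, and the unital/system cases) that the paper leaves implicit.
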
 
 
\begin{proof}   The automorphism $\theta_X$ induces a conjugate linear period 2 completely 
isometric  $G$-equivariant automorphism on $l^\infty(G,X_c)$ whose fixed points correspond to  $l^\infty(G,X)$. 
Here $$\kappa(f + ig)(t) = f(t) + i g(t) , \qquad f, g \in l^\infty(G,X), \; t \in G .$$ For $x,y \in X$
we have $$(\kappa((j_{X})_c(x + iy)))(t) = (\kappa(j_{X}(x) + i \, j_{X}(y)))(t)  = j_{X}(x) (t) + i \, j_{X}(y)(t) ,$$
which is $t^{-1} x + i \,t^{-1} y =   j_{X_c}(x+iy)(t).$  So $\kappa \circ (j_X)_c = j_{X_c}$.
 \end{proof}

 \begin{corollary} \label{Gin} If $X$ is a  real $G$-operator space (resp.\ real $G$-operator system) then $X_c$ is complex 
$G$-injective  if and only if $X$ is real $G$-injective.  \end{corollary}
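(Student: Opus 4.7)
My plan is to deduce this from the characterization of $G$-injectivity in Lemma \ref{Hrem} combined with the identification $l^\infty(G,X)_c = l^\infty(G,X_c)$ from Lemma \ref{Hreml}, together with the fact (noted just above Lemma \ref{lem1}, from Ruan/Sharma) that $X$ is real injective if and only if $X_c$ is complex injective. Once those identifications are in hand, the $G$-injectivity condition passes between $X$ and $X_c$ by complexifying a retraction in one direction, and by $\Zdb_2$-averaging in the other.

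For the forward direction, assume $X$ is real $G$-injective. By Lemma \ref{Hrem}, $X$ is real injective and there is a real $G$-morphism $\phi : l^\infty(G,X) \to X$ with $\phi \circ j_X = I_X$. Then $X_c$ is complex injective by the lines above Lemma \ref{lem1}, and the complexification $\phi_c$ is a complex linear complete contraction, which is $G$-equivariant by Remark 1 after Lemma \ref{Gcom}. Composing with the $G$-isomorphism $\kappa : l^\infty(G,X_c) \to l^\infty(G,X)_c$ of Lemma \ref{Hreml}, the identity $\kappa \circ (j_X)_c = j_{X_c}$ gives $(\phi_c \circ \kappa^{-1}) \circ j_{X_c} = \phi_c \circ (j_X)_c = (\phi \circ j_X)_c = I_{X_c}$. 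By Lemma \ref{Hrem}, $X_c$ is complex $G$-injective. The operator system case is identical once one notes that $\phi_c$ is UCP (Lemma 2.3 of \cite{BT}) and unitality is preserved.

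For the converse, assume $X_c$ is complex $G$-injective. Then $X_c$ is complex injective, so $X$ is real injective, and there is a complex $G$-morphism $\psi : l^\infty(G,X_c) \to X_c$ with $\psi \circ j_{X_c} = I_{X_c}$. Transport via $\kappa$ to get a complex $G$-morphism $\widetilde{\psi} = \psi \circ \kappa^{-1} : l^\infty(G,X)_c \to X_c$ satisfying $\widetilde{\psi} \circ (j_X)_c = I_{X_c}$. The map $\widetilde{\psi}$ need not be the complexification of a real $G$-morphism, so I will average it against the $\Zdb_2$-action: set
\[
\widetilde{\psi}' = \tfrac{1}{2}\bigl(\widetilde{\psi} + \theta_X \circ \widetilde{\psi} \circ \Theta\bigr),
\]
where $\Theta$ is the canonical conjugate linear involution on $l^\infty(G,X)_c$. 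Since the $G$-action and the $\Zdb_2$-action on $l^\infty(G,X)_c$ commute (Lemma \ref{Gcom}), $\widetilde{\psi}'$ remains a complex linear $G$-equivariant complete contraction, and it is now $\Zdb_2$-equivariant; by Corollary \ref{recc} (and Remark 2 after Lemma \ref{Gcom}) it equals $\phi_c$ for a real $G$-morphism $\phi : l^\infty(G,X) \to X$.

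The verification that $\phi \circ j_X = I_X$ uses that $(j_X)_c$ is itself $\Zdb_2$-equivariant (a direct check: $(j_X)_c(\theta_X(x+iy)) = j_X(x) - i j_X(y) = \Theta((j_X)_c(x+iy))$), so each of the two terms in the average satisfies $\cdot \circ (j_X)_c = I_{X_c}$, whence $\widetilde{\psi}' \circ (j_X)_c = I_{X_c}$, and restricting to $X$ gives $\phi \circ j_X = I_X$. Lemma \ref{Hrem} then yields $G$-injectivity of $X$. In the operator system case, the same averaging preserves unitality and complete positivity, so the argument goes through unchanged. The main conceptual point — and the place where care is needed — is the reverse direction, specifically in confirming that the $\Zdb_2$-averaging both produces a real $G$-morphism and preserves the splitting property; the compatibility of the $G$- and $\Zdb_2$-actions from Lemma \ref{Gcom} is what makes this work.
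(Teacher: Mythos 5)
Your proposal is correct, and its forward direction is exactly the paper's argument: use Lemma \ref{Hrem} to get the real retraction $\phi$, note $X_c$ is complex injective, and transport $\phi_c$ through the $G$-isomorphism $\kappa$ of Lemma \ref{Hreml} to get the complex $G$-retraction of $j_{X_c}$. The converse is where you diverge. The paper argues structurally: a complex $G$-morphism is in particular a real $G$-morphism, so $X_c$ is a real $G$-retract of the real $G$-injective space $l^\infty(G,X_c)$ and hence real $G$-injective; then $X$ is real $G$-injective because it is real $G$-complemented in $X_c$ via the canonical projection of Lemma \ref{Gcom}. You instead manufacture the real retraction $l^\infty(G,X)\to X$ demanded by Lemma \ref{Hrem} directly, by $\Zdb_2$-averaging the complex retraction and invoking Corollary \ref{recc}; your checks ($\Zdb_2$-equivariance of $(j_X)_c$, commutation of the $G$- and $\Zdb_2$-actions, preservation of the splitting identity) are all sound. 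The paper's route is shorter and reuses the general principle that retracts of ($G$-)injectives are ($G$-)injective, while yours makes the mechanism explicit and runs parallel to the averaging arguments the paper uses elsewhere (e.g.\ in Theorem \ref{ijco} and Corollary \ref{lem1G}); both are valid, and nothing in your version fails.
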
 

\begin{proof}    By Lemma \ref{Hrem} we have that $X$ is real $G$-injective if and only if $X$ is injective as a real  operator space  
 and there is a real $G$-morphism $\phi : l^\infty(G,X) \to X$ such that $\phi \circ j = I_X$. 
 By  the fact above  Lemma \ref{lem1} 
 %and Corollary  \ref{reccG}  
 this 
 %holds if and only if 
 implies that $X_c$ is injective as a real  operator space.
 Also by Lemma \ref{Hreml}  from 
   $\phi_c : l^\infty(G,X)_c  \to X_c$ we obtain   a complex $G$-morphism 
  $\psi = \phi_c \circ \kappa^{-1} : l^\infty(G,X_c)  \to X_c$ with   $\psi \circ j_{X_c}  = I_{X_c}$. 
 So $X_c$ is complex $G$-injective  by Lemma \ref{Hrem} again.

 Conversely if $X_c$ is complex $G$-injective, then it is real $G$-injective by e.g.\ a simpler variant of the arguments in the last paragraph
 (briefly, it is complex injective so real injective, so $l^\infty(G,X_c)$ is real $G$-injective and hence so is $X_c$).
So $X$  is real $G$-injective  since it  is `real $G$-complemented' in $X_c$ (see Lemma \ref{Gcom}).
\end{proof}

\begin{proposition} \label{Ginjuos} If $X$ is a  real  (resp.\ complex)  $G$-unital operator space 
then the space  $X^*$ of adjoints is a real  (resp.\ complex) $G$-unital operator space, $S = X + X^*$ is a real  (resp.\ complex) $G$-operator system, and 
$$I_G(X + X^*) = I_G(X) \;    , \; \;  \; C^*_{e,G}(X + X^*) = C^*_{e,G}(X).$$ 
Moreover any completely contractive unital $G$-equivariant map $T : X \to Y$ between real  (resp.\ complex) unital $G$-operator spaces
extends uniquely to a selfadjoint $G$-equivariant UCP map $\tilde{T} : X + X^* \to Y + Y^*$, which is also completely isometric
if $T$ is. 
\end{proposition}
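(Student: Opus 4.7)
The plan is to prove the final extension assertion first and derive the earlier parts of the proposition from it. Fix a unital $G$-equivariant completely isometric embedding of $X$ into a unital $G$-$C^*$-algebra $B$; for instance, $B = I_G(X)$, which by the discussion following Theorem \ref{ath} carries a canonical unital $G$-$C^*$-algebra structure via the Choi-Effros product applied to the $G$-equivariant idempotent projecting $l^\infty(G,B(H))$ onto $I_G(X)$. Then $X^* := \{x^* : x \in X\} \subseteq B$ and $S := X + X^* \subseteq B$ are well-defined, and $S$ inherits an operator system structure from $B$.

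For the extension statement, embed $Y$ similarly in a unital $G$-$C^*$-algebra $C$. Paulsen's classical extension argument, together with the fact (\cite[Lemma 2.3]{BT}) that UCP maps on real operator systems are automatically selfadjoint, produces a UCP extension $\tilde T : S \to C$ given by $\tilde T(x + y^*) = T(x) + T(y)^*$; selfadjointness forces this formula, yielding both existence and uniqueness, and shows the image lies in $T(X) + T(X)^* \subseteq Y + Y^*$. The $G$-equivariance of $\tilde T$ follows from uniqueness: for each $g \in G$, the map $s \mapsto u_{g^{-1}} \tilde T(u_g s)$ is a UCP extension of $T$ (using $G$-equivariance of $T$) and so must equal $\tilde T$. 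If $T$ is in addition completely isometric, applying the same procedure to $T^{-1} : T(X) \to X$ produces, by uniqueness, a two-sided inverse of $\tilde T$; a bijective UCP map with UCP inverse is a complete order isomorphism, hence in particular completely isometric.

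Applying this extension result to $T = u_g$ yields $\tilde u_g : S \to S$, and uniqueness forces $\tilde u_g \circ \tilde u_h = \tilde u_{gh}$ and $\tilde u_e = I_S$; by the isometry part each $\tilde u_g$ is a surjective unital complete order isomorphism. Hence $S$ is a (real or complex) $G$-operator system. Selfadjointness of $\tilde u_g$ gives $\tilde u_g(X^*) = u_g(X)^* = X^*$, so $X^*$ is a $G$-invariant $G$-unital operator subspace of $S$, establishing the first two assertions.

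For the envelope identifications, take $B = I_G(X)$. Then $S = X + X^* \subseteq B$ is a $G$-operator subsystem, since the restriction of the $G$-action on $B$ to $S$ is a UCP extension of the action on $X$ and so must equal $\tilde u_g$ by uniqueness. Thus $B$ is a $G$-injective $G$-extension of $S$; any $G$-equivariant UCP self-map of $B$ fixing $S$ in particular fixes $X$, and is therefore the identity by $G$-rigidity of $I_G(X)$ over $X$. This $G$-rigidity over $S$ yields $I_G(S) = B = I_G(X)$. Consequently $C^*_{e,G}(S)$ and $C^*_{e,G}(X)$, being the $G$-$C^*$-subalgebras of $B$ generated by $S$ and $X$ respectively, coincide (since $X^* \subseteq C^*(X)$). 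The chief technical hurdle is justifying the real, $G$-equivariant form of Paulsen's extension; once this is granted, the remaining claims follow from familiar uniqueness and rigidity arguments.
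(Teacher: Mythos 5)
Your argument is correct, and it reaches the same conclusions by a noticeably different organization. The paper's proof quotes the non-equivariant facts (well-definedness of $X+X^*$ and the selfadjoint UCP extension, i.e.\ the real Paulsen extension of \cite[Proposition 2.4, Corollary 2.5]{BT}) and then obtains the $G$-structure on $X^*$ and on $X+X^*$ \emph{concretely}, by computing $t\,j(x)^*=j(tx)^*$ inside $l^\infty(G,B(H))$ with the translation action; you instead derive the whole $G$-structure \emph{abstractly} from uniqueness of the selfadjoint UCP extension, applying it to each $u_g$ and to $T$. Your route makes the "exercise" the paper leaves (equivariance of $\tilde T$) explicit and automatic, at the cost of having to be careful that the actions on $S$ and $Y+Y^*$ are defined before equivariance of $\tilde T$ is asserted. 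The rigidity argument also runs in opposite directions: the paper shows $I_G(X+X^*)$ is a $G$-rigid $G$-injective extension of $X$ (using that a unital complete contraction fixing $X$ is UCP, hence selfadjoint, hence fixes $X+X^*$), whereas you show $I_G(X)$ is $G$-rigid over $S$, which is the trivial implication (fixing $S$ forces fixing $X$). Both are valid; the paper's direction avoids having to know in advance that $I_G(X)$ of a unital $G$-operator space is $G$-injective \emph{as a $G$-operator system} and carries a $C^*$-structure.

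That last point is the one caveat worth flagging: you take $B=I_G(X)$ as a unital $G$-$C^*$-algebra via Choi--Effros, but in the paper that fact for unital $G$-operator spaces (as opposed to $G$-operator systems) is only established afterwards, in Theorem \ref{ijcoG}. This is a matter of ordering rather than a gap --- the Choi--Effros argument needs only that the idempotent $\Phi$ on $l^\infty(G,B(H))$ is unital --- and it can be sidestepped entirely by running your construction inside $l^\infty(G,B(H))$ (as the paper does) and only afterwards identifying $I_G(S)$ with $I_G(X)$.
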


\begin{proof}   
For complex  unital operator spaces that $X + X^*$ is well defined as an operator system independent of representation  is well known \cite{Pau,BLM,ER} ,
as is the last assertion.   
In the real case this is the unital space assertion in Proposition 2.4 and Corollary 2.5 in \cite{BT}, and the lines after that.  The $G$-equivariance of the extension to  $X + X^*$ is
an exercise (some of the lines below may help).

As in the construction below Theorem \ref{ath}, we have $G$-equivariant unital completely isometric
inclusions $$X  \; \; \subset   \; \;  l^\infty(G,X) \; \; \subset   \; \;  l^\infty(G,B(H)) , $$ 
 where the first of these is the map $j(x)(s) = s^{-1} x$  for $s \in G, x \in X$.   
 Then $$j(X)^* \; \; \subset \; \; l^\infty(G,B(H)) \; , \; \; \; \; j(X) \, + \, j(X)^* \; \; \subset \; \; l^\infty(G,B(H)) $$ are  $G$-equivariant inclusions. 
 For example, for $s, t \in G, x \in X$ we have $$(t j(x)^*)(s) = j(x)^*(t^{-1} s) = (j(x) (t^{-1} s))^* = ((s^{-1} t) x)^* = (s^{-1} t) x^* ,$$ while 
 $$j(tx)^*(s) = (j(tx)(s))^* = (s^{-1} (tx))^*  = s^{-1} (t x^*)  = (s^{-1} t) x^* .$$ 
 Thus  $t j(x)^* = j(tx)^*$.    Hence $j(X)^*$, and so $X^*$, is a real  (resp.\ complex) $G$-unital operator space.  Similarly
 $j(X) + j(X)^*$, and so $X + X^*$, is a real  (resp.\ complex) $G$-operator system.   We are also using the fact in the first line of the proof.
 
 Choose $I_G(X + X^*)$ to be a unital $G$-$C^*$-algebra as above with unital $G$-$C^*$-subalgebra $C^*_{e}(X + X^*)$.
These are clearly $G$-injective (resp.\ $G$-$C^*$-) extensions of $X$ which are $G$-rigid since they are  $G$-rigid  as extensions of 
$X + X^*$,  and $X \subset X + X^*$.  
 \end{proof} 

Because of the last proposition many results about unital $G$-operator spaces follow from the $G$-operator
 system case.   For example  Lemma \ref{Hrem} and Corollary \ref{Gin} have appropriate variants for   unital $G$-operator spaces.

The (real or complex) ternary $G$-envelope ${\mathcal T}_G(X)$ of a (real or complex) $G$-operator space may be defined to be the (real or complex) ternary subspace 
of $I_G(X)$ generated by the copy of $X$.  
Since $X$ is a $G$-subspace of $I_G(X)$ it is clear that 
${\mathcal T}_G(X)$ is also a $G$-subspace, hence is a $G$-operator space.  
 
\begin{theorem} \label{ijcoG}    Let ${\mathcal S}$ be a  real $G$-operator space.
Then $I_G({\mathcal S})_c = I_G({\mathcal S}_c)$ and ${\mathcal T}_G({\mathcal S})_c = {\mathcal T}_G({\mathcal S}_c)$.   
 If ${\mathcal S}$ is
a real $G$-operator system or unital real  $G$-operator space then  $(I_G({\mathcal S}),j)$ may be taken to be a 
real $G$-$C^*$-algebra with $j$ unital,  indeed $I_G({\mathcal S})$ is a real $G$-$C^*$-subalgebra of  its complexification 
$I_G({\mathcal S}_c)$.  
\end{theorem}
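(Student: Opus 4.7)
The plan is to closely parallel the proof of Theorem \ref{ijco}, with each ingredient there replaced by its $G$-equivariant analogue developed in the preceding lemmas and corollaries.

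First I would establish $I_G({\mathcal S})_c = I_G({\mathcal S}_c)$. Set $Z = I_G({\mathcal S})$. By Corollary \ref{Gin}, $Z_c$ is complex $G$-injective, and by Lemma \ref{Gcom} the inclusion ${\mathcal S}_c \hookrightarrow Z_c$ is a complex linear $G$-equivariant complete isometry. To conclude that $(Z_c, j_c)$ is the complex $G$-injective envelope of ${\mathcal S}_c$ it suffices to verify $G$-rigidity. Given a complex linear $G$-equivariant complete contraction $u : Z_c \to Z_c$ extending $I_{{\mathcal S}_c}$, set $R = \tfrac{1}{2}(u + \theta_Z \circ u \circ \theta_Z)$. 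The $G$-action on $Z_c$ commutes with $\theta_Z$ by Lemma \ref{Gcom}, so $R$ is simultaneously $G$-equivariant and $\Zdb_2$-equivariant (and still extends $I_{{\mathcal S}_c}$). By the variant of Corollary \ref{recc} noted in Remark 2 after Lemma \ref{Gcom}, $R = v_c$ for some real linear $G$-equivariant complete contraction $v$ on $Z$ extending $I_{\mathcal S}$. The $G$-rigidity of $Z$ forces $v = I$, hence $R = I$; and then the same extreme-point argument as in Theorem \ref{ijco} shows $u = I$.

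Next, for the identity ${\mathcal T}_G({\mathcal S})_c = {\mathcal T}_G({\mathcal S}_c)$, let $T = {\mathcal T}_G({\mathcal S})$, the real subTRO of $I_G({\mathcal S})$ generated by ${\mathcal S}$. Using $T_c = T + iT \subset I_G({\mathcal S})_c = I_G({\mathcal S}_c)$ and expanding triple products coordinate-wise, $T_c$ is a complex subTRO containing ${\mathcal S}_c$. Conversely any complex subTRO $U$ of $I_G({\mathcal S}_c)$ with $U \supset {\mathcal S}_c$ contains ${\mathcal S}$ and is closed under real triple products, hence $U \supset T$ and, being complex, $U \supset T + iT = T_c$. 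Thus $T_c$ is the complex ternary $G$-envelope of ${\mathcal S}_c$.

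For the final clause, I would mimic the Choi--Effros construction in the setting of Hamana's construction sketched after Theorem \ref{ath}. If ${\mathcal S}$ is a real $G$-operator system or unital real $G$-operator space, realize $I_G({\mathcal S})$ as the range of a $G$-equivariant completely contractive projection $\Phi$ on $l^\infty(G, B(H))$ fixing $j({\mathcal S})$ (and, in the unital case, also $j(1)$, by arguing with the Paulsen system ${\mathcal S}({\mathcal S})$ or by the extension/unitization machinery used in Theorem \ref{ijco}(2)--(3)). The product $a \cdot b = \Phi(ab)$ and involution inherited from $l^\infty(G, B(H))$ make $I_G({\mathcal S})$ a real unital $C^*$-algebra, and this structure is $G$-equivariant because $\Phi$ and the ambient product/involution are. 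Combining with the first part of the theorem, the complexification of this real $G$-$C^*$-algebra is the unital $G$-$C^*$-algebra $I_G({\mathcal S}_c)$, and $I_G({\mathcal S})$ sits inside it as the real $G$-$C^*$-subalgebra of fixed points of $\theta$.

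The main technical point to watch is ensuring that the $\Zdb_2$-action by $\theta$ and the $G$-action commute throughout, so that the averaging $R = \tfrac{1}{2}(u + \theta \circ u \circ \theta)$ stays in the $G$-morphism category (and the Choi--Effros product is $G$-equivariant); once this commutation is in hand, every other step reduces to invoking the corresponding non-equivariant or complex result already in hand.
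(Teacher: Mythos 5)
Your proposal is correct and follows essentially the same route as the paper's proof: Corollary \ref{Gin} for $G$-injectivity of $I_G({\mathcal S})_c$, the $\theta$-averaging $R = \tfrac{1}{2}(u + \theta\circ u\circ\theta)$ combined with real $G$-rigidity and the extreme-point argument for $G$-rigidity, the observation that ${\mathcal T}_G({\mathcal S})_c$ is the $G$-TRO generated by ${\mathcal S}_c$ inside $I_G({\mathcal S}_c)$, and the Choi--Effros product on the range of the unital $G$-equivariant projection $\Phi$ for the $C^*$-algebra structure. No gaps.
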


\begin{proof}   The first part follows as in the proof of Theorem \ref{ijco}.  For example, 
by Corollary \ref{Gin}, $I_G({\mathcal S})_c$ is a $G$-injective extension  of ${\mathcal S}_c$. 
If $u : I_G({\mathcal S})_c \to I_G({\mathcal S})_c$ is a (complex linear) $G$-morphism 
extending $I_{{\mathcal S}_c}$
then as in the proof of \ref{ijco} 
we have that  $R = \frac{1}{2} (u + (\theta_{{\mathcal S}} \circ u  \circ \theta_{{\mathcal S}})) = I$ and hence $u = I$.
 Thus $I_G({\mathcal S})_c$ is a $G$-injective $G$-rigid extension  of ${\mathcal S}_c$, and hence it is
a $G$-injective  envelope of ${\mathcal S}_c$.

As stated after Lemma \ref{Hreml}, 
$Z = I({\mathcal S})$  may be taken to be a $G$-TRO.  
Since 
  ${\mathcal T}_G({\mathcal S})$ is 
the real $G$-subTRO generated by ${\mathcal S}$, we have that  $Z_c = {\mathcal T}_G({\mathcal S}) + i {\mathcal T}_G({\mathcal S})$ is a complex $G$-TRO and  
$G$-extension of ${\mathcal S}_c$, and $Z_c$ is  easily seen to be 
the $G$-TRO generated by ${\mathcal S}_c$ in 
$I_G({\mathcal S})_c$. So ${\mathcal T}_G({\mathcal S})_c = {\mathcal T}_G({\mathcal S}_c)$.

That   $(I_G({\mathcal S}),j)$ may be taken to be a 
$G$-$C^*$-algebra for $G$-unital spaces uses the same idea for \cite[Corollary 4.2.8 (1)]{BLM} or the lines above \cite[Theorem 2.5]{Hamiecds}.  
Indeed the projection $\Phi$ from  $W = l^\infty(G,B(H))$ onto $I_G({\mathcal S})$ in the construction of the $G$-injective envelope discussed 
after Theorem \ref{ath} above is unital, so UCP, and so $I_G({\mathcal S})$ is a $C^*$-algebra in  the Choi-Effros product (see the next proof for a few more details
if needed).  \end{proof} 
 
{\bf Remarks.}   1)\ Similarly there will be `$G$-versions' of the statements in   Theorem \ref{ijco} for  unital real  operator algebras or Jordan operator algebras.

\smallskip

2)\ For a finite group $G$ one may use Theorem \ref{Ginj} to give shortened proofs of  Theorem \ref{ijcoG} and Corollaries \ref{Gin}, \ref{lem1G}, and \ref{corcpxG}. 

\bigskip

 The $G$-$C^*$-envelope $C^*_{e,G}(X)$ of a  $G$-unital operator space or system $X$ 
is defined to be the (real or complex) $C^*$-algebra 
generated by the copy of $X$ in $I_G(X)$.   Nearly all of the following result is due to Hamana in the complex case (who proved it  in \cite{Hamiods} for general locally compact group actions and 
indeed much more generally than that).

\begin{corollary} \label{tocenvG} Let $X$ be  a real $G$-operator system or  $G$-unital operator space,  
with $I_G(X)$ taken to be a $C^*$-algebra as in  Theorem {\rm \ref{ijcoG}}.  \begin{itemize}
\item [(1)] 
$C^*_{e,G}(X)_c = C^*_{e,G}(X_c)$.    
\item [(2)]  $C^*_{e,G}(X)$ has the universal property:  given any unital $G$-equivariant complete isometry 
 $\kappa :  X \to D$ into a real $G$-$C^*$-algebra $D$ such that $\kappa(X)$ generates $D$ as a real $C^*$-algebra,
 there exists a (necessarily unique and surjective) $G$-equivariant  $*$-epimorphism $\pi : D \to C^*_{e,G}(X)$  such that $\pi \circ \kappa$ is the canonical inclusion of $X$ in 
 $C^*_{e,G}(X)$.    
 \item [(3)] ${\mathcal T}_G({\mathcal S})$ in Theorem {\rm  \ref{ijcoG}} has the desired universal  property of the ternary $G$-envelope. 
 Namely:  given any $G$-equivariant complete isometry 
 $\rho :  {\mathcal S} \to W$ into a real $G$-TRO $W$ such that $j({\mathcal S})$ generates $W$ as a real $G$-TRO, 
 there exists a $G$-equivariant ternary morphism $\pi : W \to {\mathcal T}({\mathcal S})$ such that $\pi \circ \rho = j$.  
\end{itemize}
 \end{corollary}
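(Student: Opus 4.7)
The plan is to derive parts (1)--(3) from their complex-case counterparts (established by Hamana in \cite{Hamiods}) using the complexification machinery of Section 2 and Theorem \ref{ijcoG}. Throughout, let $\theta_Y$ denote the canonical conjugate linear period-$2$ automorphism on a complexification $Y_c$, as in Proposition \ref{chco}.

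For (1), since $C^*_{e,G}(X) \subset I_G(X)$, Theorem \ref{ijcoG} places $C^*_{e,G}(X)_c$ inside $I_G(X_c) = I_G(X)_c$ as a complex $C^*$-subalgebra containing $X_c$; minimality of $C^*_{e,G}(X_c)$ then gives $C^*_{e,G}(X_c) \subseteq C^*_{e,G}(X)_c$. Conversely, by Theorem \ref{ijcoG} the involution $\theta_X$ restricts to a real $*$-automorphism of the $C^*$-algebra $I_G(X_c)$ whose fixed points are $I_G(X)$. Since $X_c$ is $\theta_X$-invariant, so is $C^*_{e,G}(X_c)$, and its fixed-point set is a real $C^*$-subalgebra of $I_G(X)$ containing $X$, hence containing $C^*_{e,G}(X)$. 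Applying Proposition \ref{chco} to $\theta_X$ restricted to $C^*_{e,G}(X_c)$ yields the reverse inclusion $C^*_{e,G}(X)_c \subseteq C^*_{e,G}(X_c)$, completing the equality.

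For (2), complexify the data: $\kappa_c : X_c \to D_c$ is a complex $G$-equivariant unital complete isometry, and $\kappa_c(X_c) = \kappa(X) + i\kappa(X)$ generates $D_c$ as a complex $C^*$-algebra because $\kappa(X)$ generates $D$ as a real $C^*$-algebra. Hamana's complex-case universal property of $C^*_{e,G}$, combined with (1), produces a unique $G$-equivariant complex $*$-epimorphism $\pi_c : D_c \to C^*_{e,G}(X_c)$ with $\pi_c \circ \kappa_c = j_{X_c}$. Using Lemma \ref{Gcom} together with the fact that $\kappa$ is real linear (so $\theta_D \circ \kappa_c = \kappa_c \circ \theta_X$, and $j_{X_c}$ intertwines the $\theta_X$'s on $X_c$ and on $C^*_{e,G}(X_c) \subset I_G(X_c)$), the map $\theta_X \circ \pi_c \circ \theta_D$ satisfies the same defining conditions as $\pi_c$, so by uniqueness $\pi_c$ intertwines $\theta_D$ and $\theta_X$. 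Restricting to fixed-point sets yields the required real $G$-equivariant $*$-epimorphism $\pi : D \to C^*_{e,G}(X)$ with $\pi \circ \kappa = j$, and $\pi$ is unique because any two such morphisms agree on the generating set $\kappa(X)$. Part (3) proceeds in exactly the same way: the complexification $\rho_c : {\mathcal S}_c \to W_c$ is a complex $G$-equivariant complete isometry whose image generates $W_c$ as a complex $G$-TRO, so Hamana's complex ternary-envelope universal property (with the identification ${\mathcal T}_G({\mathcal S})_c = {\mathcal T}_G({\mathcal S}_c)$ from Theorem \ref{ijcoG}) produces a unique $G$-equivariant complex ternary morphism $\pi_c : W_c \to {\mathcal T}_G({\mathcal S}_c)$ extending $j$, and the same uniqueness argument forces it to restrict to the desired real morphism $\pi : W \to {\mathcal T}_G({\mathcal S})$.

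The main obstacle I anticipate is confirming that Hamana's uniqueness statements in the complex $G$-setting are in precisely the form required to force $\theta$-equivariance of $\pi_c$. This ultimately reduces to the routine observation that $*$-homomorphisms between $C^*$-algebras, respectively ternary morphisms between TROs, are determined by their values on any generating subset, so the desired $\theta$-equivariance drops out of pure formalism once complex-case existence is granted.
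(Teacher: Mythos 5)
Your argument is correct, but for parts (1) and (2) it follows a genuinely different route from the paper's. For (1) the paper does not run a double-inclusion argument with $\theta$-invariance and fixed points as you do; it instead observes that for a unital subspace the generated $C^*$-subalgebra coincides with the generated ternary subspace, and then quotes the identity ${\mathcal T}_G({\mathcal S})_c = {\mathcal T}_G({\mathcal S}_c)$ already proved in Theorem \ref{ijcoG}. For (2) the paper does not complexify and descend: it re-runs Hamana's construction in the real $G$-category, embedding $D$ in $l^\infty(G,B(H))$, producing the $G$-equivariant completely positive idempotent $\Phi$ whose range (with the Choi--Effros product) is a $G$-injective envelope, and checking directly that $\Phi_{|D}$ is a $G$-equivariant $*$-homomorphism onto the copy of $C^*_{e,G}(X)$. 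Your complexification-plus-uniqueness argument (using that $\theta \circ \pi_c \circ \theta_D$ satisfies the same defining conditions as $\pi_c$, so $\pi_c$ is $\theta$-equivariant and restricts to the fixed-point algebras) is sound, since $*$-homomorphisms and ternary morphisms are determined on generating sets; it is shorter but leans on Hamana's complex statement being available in exactly the needed form, whereas the paper's direct adaptation is self-contained in the real category and does not require parsing the precise hypotheses of the complex theorem. For (3) the two proofs essentially agree: the paper likewise says the real case is immediate from the complex case applied to $\rho_c$ and $j_c$, which is what you do. Minor points to tidy: the involution you restrict in (1) is $\theta_{I_G(X)}$ (the extension of $\theta_X$ to $I_G(X)_c = I_G(X_c)$ furnished by Theorem \ref{ijcoG}), not $\theta_X$ itself; and you should note explicitly that $\kappa_c$ is a complete isometry because the complexification of a real linear complete isometry between reasonable complexifications is one, by Ruan's uniqueness theorem.
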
 

\begin{proof}   Essentially we just need to explicate Hamana's method in the present setting.  

(1)\  Let $A$ be  a real unital $G$-operator space or system, and $I_G(A_c)$ a unital $G$-$C^*$-algebra as in the last result. 
Since the $C^*$-subalgebra of a unital $C^*$-algebra
 generated by a  subspace containing the identity 
is the TRO generated by  that subspace,  $C^*_e(A)_c = C^*_e(A_c)$ as complex $G$-$C^*$-algebras and as $G$-extensions of $A_c$
by the 
analogous statement for the $G$-ternary envelope in Theorem \ref{ijcoG}.

(2)\ For the universal property we follow the argument in
4.3.3 in \cite{BLM}.
First we choose a $G$-injective envelope $I_G(X)$ of $X$
which is a unital $G$-$C^*$-algebra containing $X$ unitally, as in Theorem \ref{ijcoG}. Then $C^*_{e,G}(X)$ is a $C^*$-subalgebra of  $I_G(X)$.
 Suppose  that $(D,\kappa)$ is any $G$-$C^*$-extension of a
(real or complex) $G$-operator
system $X$, with $D$ generated by $\kappa(X)$, 
and suppose that $D$ is a $G$-$C^*$-algebra which is a unital $*$-subalgebra of $B(H)$.  Then $j_D(\kappa(X)) \subset l^\infty(G,B(H))$.  By
the construction
of the $G$-injective envelope discussed after Theorem \ref{ath},
there is a $G$-equivariant
completely positive idempotent map $\Phi$ on
$l^\infty(G,B(H))$ whose range is a $G$-injective envelope $R$ of $j_D(\kappa(X))$.  Also, $R$
is a $C^*$-algebra with respect to a new product, the Choi-Effros product. With respect to the
usual product on $l^\infty(G,B(H))$, the $C^*$-subalgebra of $l^\infty(G,B(H))$
generated
by $R$ contains $j_D(D)$, the $C^*$-subalgebra of $l^\infty(G,B(H))$ generated by
$j_D(\kappa(X))$. We let $B$ be the $C^*$-subalgebra (in the new product) of
$R$ generated by $j_D(\kappa(X))$.  As in 4.3.3 in \cite{BLM}, a formula related to the Choi-Effros product shows that $\pi =
\Phi_{\vert D}$ is a $*$-homomorphism from $D$ to $R$, with
respect to the new product on $R$.  Since $\pi$ extends the
identity map on $j_D(\kappa(X))$, it clearly also maps into $B$. Since the canonical inclusion of $X$ in 
 $C^*_{e,G}(X)$ is $G$-equivariant, we have $\pi(\kappa(g x) )= g \pi(\kappa(x) )$ for $x \in X$.
 Thus $\pi(\kappa(g x) y)= g \pi(\kappa(x) y)$ for $x \in X$ and $y$ a product of terms from $\kappa(X)$ and $\kappa(X)^*$. So $\pi$ is $G$-equivariant.

As in the usual argument (e.g.\ 4.3.3 in \cite{BLM}),
the natural unital $G$-equivariant completely isometric
surjection $R \rightarrow I_G(X)$ is a
$*$-homomorphism. Hence it is clear that $(B, j_D \circ \kappa)$,
as a  $G$-$C^*$-extension of $X$, is identifiable with $C^*_{e,G}(X)$.
Putting these facts together, we see that $C^*_{e,G}(X)$ has the
desired universal property.

(3)\ The universal property of ${\mathcal T}_{G}(X)$
can be proved either similarly to our proof in the last paragraphs but using Youngson's theorem (\cite[Theorem 4.4.9]{BLM}, and its real variant in \cite{BReal})
in place of the Choi-Effros product and formula referred to above; 
or similarly to the proof of 8.3.11 in \cite{BLM} but with all morphisms $G$-equivariant.  The latter  is the approach taken in the 
complex case in \cite[Theorem 4.3]{Hamiods}. 
 The real case is immediate from the complex case applied to $\rho_c : {\mathcal S}_c \to W_c$ and $j_c$.
   \end{proof} 

{\bf Remarks.} 1)\ As usual, the real $G$-$C^*$-envelope of $A$ is a $G$-rigid and $G$-essential extension in the appropriate sense of the category in which we are in.
This is because it sits between  $A$ and its $G$-injective envelope, and the latter is $G$-rigid and $G$-essential.   Similarly, ${\mathcal T}_G(X)$ is a $G$-rigid and $G$-essential
extension  of the real or complex $G$-operator space $X$.

\smallskip

2)\ In particular if $(Z,j)$ is a ternary $G$-extension of ${\mathcal S}$ with the universal property 
%ending the last proof 
in (3), then the usual 
commutative diagram argument  shows that  there is a $G$-equivariant ternary {\em isomorphism} $\theta : Z \to {\mathcal T}({\mathcal S})$ such that $\theta \circ j$ is the canonical inclusion of
 $X$ in 
 ${\mathcal T}({\mathcal S})$.   So $Z$ `equals' ${\mathcal T}({\mathcal S})$ as ternary $G$-extensions of $X$.

\begin{corollary} \label{lem1G}  A complex $G$-operator space or system $X$  is real $G$-injective if and only if $X$ is complex $G$-injective.   
 \end{corollary}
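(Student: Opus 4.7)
The plan is to combine the intrinsic characterization of $G$-injectivity in Lemma \ref{Hrem} with the averaging trick from Lemma \ref{lem1}. By Lemma \ref{Hrem}, $X$ is real (resp.\ complex) $G$-injective if and only if $X$ is injective as a real (resp.\ complex) operator space (or system) and there exists a real (resp.\ complex) $G$-morphism $\phi : l^\infty(G,X) \to X$ with $\phi \circ j_X = I_X$. Lemma \ref{lem1} already equates real and complex injectivity of $X$ itself, so the task reduces to transferring a $G$-retraction onto $X$ from one category to the other.

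The easy direction is that complex $G$-injectivity implies real $G$-injectivity: a complex $G$-morphism $\phi$ furnished by Lemma \ref{Hrem} is automatically a real $G$-morphism, and Lemma \ref{lem1} converts complex injectivity to real injectivity. For the converse, given a real $G$-morphism $\phi : l^\infty(G,X) \to X$ with $\phi \circ j_X = I_X$, I would mimic Lemma \ref{lem1} and set
\[
\psi(f) \,=\, \tfrac{1}{2}\bigl(\phi(f) - i\,\phi(if)\bigr), \qquad f \in l^\infty(G,X).
\]
Complex linearity and complete contractivity are verified exactly as there. The retraction property $\psi \circ j_X = I_X$ follows from the identity $i \cdot j_X(x) = j_X(ix)$, which holds because the $G$-action on the complex $G$-operator space $X$ is complex linear: indeed $(i\,j_X(x))(s) = i\,s^{-1}x = s^{-1}(ix) = j_X(ix)(s)$. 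Hence $\phi(i\,j_X(x)) = ix$ and $\psi(j_X(x)) = \tfrac{1}{2}(x - i(ix)) = x$.

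The only step where the $G$-structure is actually used is the verification that $\psi$ is still $G$-equivariant. Here the key point, and what I expect to be the main thing to check, is that multiplication by $i$ on $l^\infty(G,X)$ commutes with the $G$-action $(tf)(s) = f(t^{-1}s)$: since the $G$-action on $X$ is complex linear, $i(tf) = t(if)$. Invoking $G$-equivariance of $\phi$ then gives $\psi(tf) = t\,\psi(f)$. In the system case one also needs $\psi$ UCP; but $j_X(1_X) = 1_{l^\infty(G,X)}$ and $i \cdot 1_{l^\infty(G,X)} = j_X(i\,1_X)$ yield $\psi(1) = 1$, and then unitality plus complete contractivity gives UCP by the criterion recalled in the introduction. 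A final application of Lemma \ref{Hrem} in the complex category completes the proof.
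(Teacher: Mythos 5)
Your proposal is correct and follows essentially the same route as the paper: both reduce to the characterization in Lemma \ref{Hrem}, invoke Lemma \ref{lem1} for plain injectivity, and complexify the real $G$-retraction via the same averaging formula $\psi(f) = \tfrac{1}{2}(\phi(f) - i\,\phi(if))$, checking $G$-equivariance and (in the system case) unitality exactly as the paper does. Your write-up in fact spells out a couple of small verifications (that multiplication by $i$ commutes with the $G$-action on $l^\infty(G,X)$, and the identity $i\,j_X(x)=j_X(ix)$) that the paper leaves implicit.
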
 
 
\begin{proof}  By Lemma \ref{Hrem}, $X$ is real $G$-injective if and only if $X$ is injective as a real  operator space  or system 
 and there is a real $G$-morphism $\phi : l^\infty(G,X) \to X$ such that $\phi \circ j = I_X$.
 Now $X$ is injective as a real  operator space  or system  if and only if    $X$ is injective as a complex  operator space  or system,
 by Lemma \ref{lem1}. 
Let 
$Q(f) = \frac{1}{2} (\phi(f) - i \phi(if))$ for $f \in l^\infty(G,X)$.   As in the proof of Lemma \ref{lem1} we see that $Q$ is  complex linear  and $Q \circ j = I_X$.
It is also $G$-equivariant and completely contractive.  In the operator system case $Q(1) = Q(j(1)) = 1$, so that 
$Q$ is UCP and a $G$-morphism.  
% Putting these facts together and u
Using Lemma \ref{Hrem} again we see that $X$ is complex $G$-injective.   The converse is similar, but easier, and as in 
the last part of Corollary \ref{Gin}. 
 \end{proof} 
 
 {\bf Remark.} A similar result holds in the unital $G$-operator space case, with a slight variant of the proof which is left to the reader.
   The same goes for the first statement in the next result.

\begin{corollary} \label{corcpxG}   The real and complex $G$-injective envelopes of a complex $G$-operator space or system  coincide. 
The real and complex ternary $G$-envelopes of a complex $G$-operator space coincide.  The real and complex  $G$-$C^*$-envelopes of a complex unital 
$G$-operator space coincide. 
\end{corollary}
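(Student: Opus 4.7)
The plan is to equip the real $G$-injective envelope $I := I_G^{\Rdb}(Y)$ of a complex $G$-operator space $Y$ with a compatible complex structure, thereby turning it into a complex $G$-injective envelope of $Y$; the uniqueness clause in Theorem \ref{ath} will then yield the canonical complex $G$-isomorphism $I_G^{\Rdb}(Y) \cong I_G^{\Cdb}(Y)$. First, applying real $G$-injectivity of $I$ to the real linear $G$-equivariant complete isometry $y \mapsto iy$ from $Y$ into $I$, I extend it to a real $G$-morphism $J : I \to I$, and similarly extend $y \mapsto -iy$ to a map $J' : I \to I$. The composites $JJ'$, $J'J$, and $-J^2$ all extend the identity on $Y$, so real $G$-rigidity of $I$ forces $JJ' = J'J = I$ and $J^2 = -I$. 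Hence $J$ is a $G$-equivariant completely isometric real linear bijection on $I$ with $J^2 = -I$.

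Declaring $iz := Jz$ then makes $I$ a complex operator space in which multiplication by $i$ is isometric at every matrix level (via the amplification $J^{(n)}$, which is itself a complete isometry), and the $G$-equivariance of $J$ makes the $G$-action complex linear, so $I$ becomes a complex $G$-operator space containing $Y$ as a complex $G$-subspace. By Corollary \ref{lem1G}, $I$ is complex $G$-injective. Complex $G$-rigidity of $Y \subset I$ is automatic, since any complex $G$-morphism $I \to I$ extending $I_Y$ is \emph{a fortiori} a real $G$-morphism extending $I_Y$, hence the identity by real $G$-rigidity. Thus $I$ is a complex $G$-injective envelope of $Y$, and Theorem \ref{ath} delivers the canonical isomorphism with $I_G^{\Cdb}(Y)$. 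The operator system variant is handled by the same argument, since operator system injectivity coincides with operator space injectivity.

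Once $I_G^{\Rdb}(Y)$ and $I_G^{\Cdb}(Y)$ are identified, the ternary and $G$-$C^*$-envelope statements follow from the observation that $Y$ is closed under multiplication by $i$: for any ternary product (resp.\ $*$-monomial) $a b^* c \cdots$ with factors in $Y \cup Y^*$, one has $i(a b^* c \cdots) = (ia) b^* c \cdots$, with $ia \in Y$ when $a \in Y$, and $i y^* = -(iy)^* \in Y^*$ when $a = y^* \in Y^*$. Hence the real linear span of such products is already a complex subspace, so after taking closures the real and complex ternary (resp.\ $C^*$-) subspaces of $I_G(Y)$ generated by $Y$ coincide, yielding ${\mathcal T}_G^{\Rdb}(Y) = {\mathcal T}_G^{\Cdb}(Y)$ and (in the unital case, via Corollary \ref{tocenvG}) the equality of real and complex $G$-$C^*$-envelopes. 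The most delicate step is the construction in the first paragraph, namely checking that $J$ really yields a genuine complex operator space structure on $I$ that is compatible with the $G$-action and with the original complex structure on $Y$; once that is in hand, Corollary \ref{lem1G} and the universal properties of Theorem \ref{ath} and Corollary \ref{tocenvG} finish the argument.
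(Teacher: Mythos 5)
Your overall strategy runs in the opposite direction from the paper's: you start from the real $G$-injective envelope $I=I_G^{\Rdb}(Y)$ and try to manufacture a complex structure on it, whereas the paper starts from the complex $G$-injective envelope, notes it is real $G$-injective by Corollary \ref{lem1G}, and then proves real $G$-rigidity by averaging a real $G$-morphism $u$ into the complex linear map $Q(z)=\frac{1}{2}(u(z)-iu(iz))$ and invoking complex rigidity plus the extreme point argument from Theorem \ref{ijco}. Your construction of $J$ and the identities $JJ'=J'J=I$, $J^2=-I$ via rigidity are correct, and your treatment of the ternary and $C^*$-envelope statements (via $Y$ being closed under multiplication by $i$) matches the paper's. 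But the step you yourself flag as delicate is a genuine gap, not merely a detail: a real linear complete isometry $J$ with $J^2=-I$ does \emph{not} in general induce a complex operator space structure via $iz:=Jz$. The obstruction is that one needs $\|(\cos\theta)z+(\sin\theta)Jz\|=\|z\|$ for \emph{all} $\theta$ (and the corresponding matricial conditions with complex scalar matrices), and this does not follow from $J$ being a (complete) isometry. Already at the Banach space level this fails: on $(\Rdb^2,\|\cdot\|_1)$ the rotation $J(x,y)=(-y,x)$ is an isometry with $J^2=-I$, yet $\|e^{i\pi/4}\cdot(1,0)\|_1=\sqrt{2}$ in the induced complex structure, so the norm is not a complex norm. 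Thus ``multiplication by $i$ is isometric at every matrix level'' is true but insufficient.

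Nor does rigidity obviously close the gap: extending $y\mapsto e^{i\theta}y$ to a complete isometry $R_\theta$ of $I$ gives a one-parameter group commuting with $J$, but to conclude one would need $R_\theta=(\cos\theta)I+(\sin\theta)J$, and the candidate comparison map $R_{-\theta}\circ((\cos\theta)I+(\sin\theta)J)$ has cb-norm bounded only by $|\cos\theta|+|\sin\theta|>1$, so rigidity does not apply to it. The natural way to repair your argument is to compare $I$ with the complex $G$-injective envelope and transport its complex structure across the resulting identification --- but that is essentially the paper's proof. I recommend reversing the direction of your argument as the paper does: take $I_G^{\Cdb}(Y)$, which is already a complex $G$-operator space, show it is real $G$-injective (Corollary \ref{lem1G}), and establish real $G$-rigidity by the averaging trick $Q(z)=\frac{1}{2}(u(z)-iu(iz))$ together with the fact that the identity is an extreme point of the unit ball of the relevant unital Banach algebra. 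Your final paragraph on the ternary and $G$-$C^*$-envelopes is then correct as written.
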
 

\begin{proof}  
 Let $I_G(X)$ be the complex $G$-injective envelope of the complex $G$-operator space or system
 $X$. Then $I_G(X)$ is complex $G$-injective and $G$-rigid. By Corollary \ref{lem1G}  we have that 
$I_G(X)$ is real $G$-injective. So to show that it is a real $G$-injective envelope, it is enough to show that it is real $G$-rigid. Consider $X$ and $I_G(X)$ as real spaces and consider a  real linear $G$-morphism $u : I_G(X) \to I_G(X)$ 
extending $I_{X}$.   As in the proof of 
Corollary 
% \ref{corcpx} or 
 \ref{lem1G} we have that  $Q(z)= \frac{1}{2} (u(z) - i u(iz))$, for $z \in I_G(X)$, is a complex linear 
$G$-morphism extending $I_{X}$.  By 
complex rigidity and  the extreme point argument 
 in the proof of  Theorem \ref{ijco} we have $Q = I =u$.   Thus $I_G(X)$ is a real $G$-injective $G$-rigid extension  of $X$, and hence it is
a real $G$-injective  envelope of $X$.

The real  ternary $G$-envelope is the real $G$-TRO in  the real $G$-injective envelope,
hence in 
%$I_{\Rdb}(X) = 
$I_{G}(X)$, generated by $X$.   Since $X$ is closed in $I_{G}(X)$ under multiplication by $i$,
this agrees with the complex $G$-TRO in  $I_{\Cdb}(X)$ generated by $X$.   So the real and complex ternary $G$-envelopes 
coincide.  The $G$-$C^*$-envelope assertion follows from this if we take $I_{G}(X)$ a $G$-$C^*$-algebra with the same identity as $X$ as in Theorem 
\ref{ijcoG}. 
\end{proof} 

%From the case 

\begin{theorem} \label{cist}   A complex operator space which is a real TRO is a complex TRO.
A complex operator space which is a real $C^*$-algebra  is a complex $C^*$-algebra. 
A complex operator space which is a real approximately unital operator algebra   is a complex operator algebra.
 \end{theorem}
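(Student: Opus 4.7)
The plan is to prove the three statements in sequence, handling the TRO case via the ternary envelope, reducing the $C^*$-algebra case to it, and then deriving the operator algebra case via the injective envelope and the $C^*$-algebra case.

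For the TRO statement, let $X$ be a complex operator space that is also a real TRO. Applying the universal property of the real ternary envelope (Corollary~\ref{tocenvG}(3) with trivial $G$) to the identity map $\mathrm{id}: X \to X$, viewed as a real completely isometric embedding into the real TRO $X$ whose image generates $X$, produces a ternary morphism $\pi : X \to \mathcal{T}_{\Rdb}(X)$ coinciding with the canonical embedding $j$. Thus $j$ is itself an injective completely isometric ternary morphism whose image generates $\mathcal{T}_{\Rdb}(X)$ as a TRO, so it is a real ternary isomorphism of $X$ onto $\mathcal{T}_{\Rdb}(X)$. By Corollary~\ref{corcpxG}, the real and complex ternary envelopes of the complex operator space $X$ coincide as ternary subsystems of the common injective envelope $I_{\Rdb}(X) = I_{\Cdb}(X)$, inheriting a single triple product. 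Hence $X \cong \mathcal{T}_{\Rdb}(X) = \mathcal{T}_{\Cdb}(X)$ as TROs, proving that $X$ is a complex TRO.

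For the $C^*$-algebra statement, a real $C^*$-algebra is a real TRO with triple product $[x,y,z] = xy^*z$, so by the TRO case $X$ is a complex TRO for this same product. Writing $J$ for multiplication by $i$ on $X$, the complex TRO identities read $[Jx,y,z] = J[x,y,z]$, $[x,Jy,z] = -J[x,y,z]$, and $[x,y,Jz] = J[x,y,z]$. Let $(e_t)$ be a real cai for $X$; since $e_t^* = e_t$ we have $[x, e_t, y] = x e_t y \to xy$ and $[e_t, y, e_t] = e_t y^* e_t \to y^*$ in norm. Passing the bounded map $J$ through these limits and using the three identities yields
\[ (Jx)y = \lim_t [Jx, e_t, y] = J(xy), \qquad x(Jy) = \lim_t [x, e_t, Jy] = J(xy), \]
\[ (Jy)^* = \lim_t [e_t, Jy, e_t] = -J(y^*), \]
so the multiplication is complex bilinear and the involution is conjugate linear with respect to $J$. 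Thus $X$ is a complex $C^*$-algebra.

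For the operator algebra statement, let $X$ be a complex operator space that is also a real approximately unital operator algebra. By Theorem~\ref{ijco}(3) we may take the real injective envelope $I_{\Rdb}(X)$ to be a unital real $C^*$-algebra, with canonical embedding $j : X \hookrightarrow I_{\Rdb}(X)$ a real completely isometric homomorphism. By Corollary~\ref{corcpxG}, $I_{\Rdb}(X) = I_{\Cdb}(X)$ as complex operator spaces, and applying the $C^*$-algebra case just proved then shows that $I_{\Rdb}(X)$ is a complex $C^*$-algebra. The embedding $j$ is simultaneously $\Cdb$-linear (being the canonical inclusion into the complex injective envelope) and a real algebra homomorphism, so $j(X)$ is a complex-linear subspace of $I_{\Rdb}(X)$ closed under the ambient (complex bilinear) multiplication. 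Hence $j(X)$, and therefore $X$, is a complex subalgebra of the complex $C^*$-algebra $I_{\Rdb}(X)$, making $X$ a complex operator algebra.

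The most delicate point is in the $C^*$-algebra paragraph, namely verifying that the complex trilinear product supplied by the identification $\mathcal{T}_{\Rdb}(X) = \mathcal{T}_{\Cdb}(X)$ actually coincides with the original product $xy^*z$ coming from the real $C^*$-algebra structure. This reduces to the fact that both ternary envelopes sit as the ternary subsystem generated by $X$ inside the common injective envelope $I_{\Rdb}(X) = I_{\Cdb}(X)$, so they share the single triple product inherited from this ambient TRO.
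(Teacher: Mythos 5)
Your proof is correct, and your first paragraph is precisely the paper's argument for the TRO statement (apply Corollary~\ref{corcpxG} with trivial $G$ to the ternary envelope), just written out in full. For the other two claims you reverse the paper's order of deduction. The paper proves the operator algebra statement directly from the BRS characterization of operator algebras (\cite[Theorem 2.3.2]{BLM}): an approximately unital algebra which is a complex operator space with completely contractive multiplication satisfies the hypotheses of that theorem; it then deduces the $C^*$-algebra statement by observing that a real $C^*$-algebra $A$ which is a complex operator algebra $B$ satisfies $A = A \cap A^* = \Delta(B)$, the diagonal of $B$, which is a complex $C^*$-algebra. You instead go TRO $\Rightarrow$ $C^*$-algebra $\Rightarrow$ operator algebra, recovering the binary product and involution from the now complex sesquilinear triple product via a selfadjoint cai, and then handling the algebra case by a homomorphic embedding into the injective envelope, which is a real and hence (by your second step) complex $C^*$-algebra. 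Your route buys an explicit verification that the a priori only real-bilinear product is automatically $\Cdb$-bilinear --- the cai computation --- a point the BRS route leaves implicit; the cost is that your operator algebra case invokes the full envelope machinery of Theorem~\ref{ijco}(3) where BRS is more elementary. Two details deserve a sentence each if you keep your route: the claim that $j$ is simultaneously $\Cdb$-linear and a real homomorphism requires the uniqueness of the real injective envelope to transport the complex structure of $I_{\Cdb}(X)$ onto the $C^*$-algebra realization of $I_{\Rdb}(X)$ supplied by Theorem~\ref{ijco}(3); and your final step in the $C^*$-algebra paragraph rests on the abstract Gelfand--Naimark characterization that a complex Banach $*$-algebra satisfying the $C^*$-identity is a $C^*$-algebra, which should be cited since it is exactly what fails over $\Rdb$.
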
 

\begin{proof}  To see the first statement apply Corollary \ref{corcpxG} with $G = (0)$   to the ternary envelope. 
The third statement  follows from the `BRS characterization' of operator algebras \cite[Theorem 2.3.2]{BLM}.  Indeed if an approximately unital algebra  $A$ 
is a complex operator space, and in addition satisfies the `real condition' $\| x y \| \leq \| x \| \| y \|$ for $x, y \in M_n(A)$, then 
it satisfies all the hypotheses of the cited characterization of complex operator algebras. 

If $A$ is a real  $C^*$-algebra then it is a complex operator algebra $B$ by the third statement.   
%Since $A$ is a real  $C^*$-algebra w
We have $A = A \cap A^* = \Delta(B)$,  which is a complex $C^*$-algebra.  We remark that the 
`diagonal'  $\Delta( \cdot )$ of an operator algebra is well defined independently of representation  in both the real and complex cases
by 2.1.2 in \cite{BLM} and \cite{BReal}. 
\end{proof}

 \begin{theorem} \label{Ginj} Let $G$ be a finite group.
 If $S$ is a  real  (resp.\ complex) operator space or system which is also a  real  (resp.\ complex) $G$-operator space or system 
%on which $G$ acts, 
then  $S$ is real  (resp.\ complex) $G$-injective  if and only if $S$ is real  (resp.\ complex)  injective.
 \end{theorem}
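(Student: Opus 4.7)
The forward direction is immediate from Lemma \ref{Hrem}: any $G$-injective (real or complex) operator space (or system) is injective as an (real or complex) operator space (or system) by definition of the characterization in that lemma. So the content of the theorem is the converse, and this is where finiteness of $G$ enters through the averaging trick indicated in Remark~3 after Lemma \ref{Gcom}.

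For the converse, assume $S$ is injective in the underlying category (real or complex operator space or system). By Lemma \ref{Hrem}, it suffices to produce a $G$-equivariant completely contractive (in the system case, UCP) map $\phi : l^\infty(G,S) \to S$ with $\phi \circ j_S = I_S$. Using injectivity of $S$ and the fact that $j_S : S \to l^\infty(G,S)$ is a completely isometric (unital, in the system case) embedding, choose any completely contractive (respectively UCP) extension $\psi : l^\infty(G,S) \to S$ of the identity on $j_S(S)$. Since $G$ is finite, define
\[
\phi(f) \;=\; \frac{1}{|G|} \sum_{g \in G} g^{-1}\, \psi(g \cdot f), \qquad f \in l^\infty(G,S).
\]
The map $\phi$ is a convex combination of completely contractive (respectively UCP) maps, hence is itself completely contractive (respectively UCP). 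A direct substitution $g' = gh$ in the sum shows $\phi(h f) = h \phi(f)$ for every $h \in G$, so $\phi$ is $G$-equivariant.

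It remains to check $\phi \circ j_S = I_S$. Recall $j_S(x)(s) = s^{-1} x$ and $(g f)(s) = f(g^{-1} s)$. Thus $(g \cdot j_S(x))(s) = j_S(x)(g^{-1}s) = (g^{-1}s)^{-1} x = s^{-1}(gx) = j_S(gx)(s)$, so $g \cdot j_S(x) = j_S(gx)$. Since $\psi$ fixes $j_S(S)$ pointwise, $\psi(j_S(gx)) = gx$, and therefore
\[
\phi(j_S(x)) \;=\; \frac{1}{|G|}\sum_{g\in G} g^{-1}(gx) \;=\; x.
\]
Applying Lemma \ref{Hrem}, we conclude that $S$ is $G$-injective. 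The real and complex cases are handled by the same argument verbatim, choosing $\psi$ real linear in the former and complex linear in the latter.

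There is no real obstacle here; the only point to be slightly careful about is the operator system case, where one must verify that $\phi$ is unital. But $j_S(1)(s) = s^{-1} \cdot 1 = 1$ (the $G$-action on an operator system is unital), so $j_S(1)$ is the unit of $l^\infty(G,S)$; since $\psi$ is UCP and extends the identity on $j_S(S)$, we have $\psi(1) = 1$, and then $\phi(1) = \frac{1}{|G|}\sum_g g^{-1}\cdot 1 = 1$. This same outline also handles the unital $G$-operator space case mentioned in the preceding remarks, using the $G$-equivariant completely isometric unital embedding into $l^\infty(G,S)$ in place of the system embedding.
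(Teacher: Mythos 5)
Your proof is correct, but it pivots on a different lemma than the paper's. The paper proves (injective $\Rightarrow$ $G$-injective) directly from the definition of $G$-injectivity: given a $G$-equivariant complete contraction $\phi : V \to S$ with $V$ a $G$-subspace of $W$, it extends $\phi$ by ordinary injectivity and averages the translates $\psi_g(x) = g\psi(g^{-1}x)$ over the finite group; for the converse it does not argue directly but appeals to Theorem \ref{mkb} (that is, $I_G(S) = I(S)$, whose proof in turn uses only the forward direction). You instead run both directions through Hamana's characterization, Lemma \ref{Hrem}: the implication ($G$-injective $\Rightarrow$ injective) is then immediate from the statement of that lemma, and for the converse you average a single concrete extension, namely a completely contractive retraction $\psi : l^\infty(G,S) \to S$ of $j_S$, rather than extensions of arbitrary morphisms. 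Your computations are all sound: the change of variable giving $G$-equivariance of the average, the identity $g\cdot j_S(x) = j_S(gx)$ yielding $\phi\circ j_S = I_S$, and unitality in the system case (where unital plus completely contractive gives UCP). What your route buys is self-containment -- the backward implication does not lean on the later theorem, avoiding the mild circularity the paper must flag parenthetically. What the paper's route buys is that its forward-direction argument is precisely the ingredient reused in the proof of Theorem \ref{mkb} (to see that $I(X)$ is $G$-injective once it is made a $G$-space), and it is phrased so as to generalize to compact groups by replacing the finite average with a Bochner integral, as noted in the remark following the theorem; your average over $l^\infty(G,S)$ admits the same replacement, so nothing is lost there either.
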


\begin{proof}  We just do the  operator space case, the  result for $G$-operator systems is similar.
Suppose that $S$ is injective.  Let $V$ and $W$ be $G$-operator spaces, with $V$ a $G$-subspace of $W$
(that is, the inclusion map is $G$-equivariant).  Let $\phi: V \to S$ be a 
real  (resp.\ complex)  $G$-equivariant complete contraction. Then since $S$ is injective, we may extend $\phi$ to a 
real  (resp.\ complex) $\psi: W \to S$, which may not be $G$-equivariant.  For each $g\in G,$ the map $\psi_g : W \to S$ defined by $\psi_g (x) = g \psi(g^{-1}x)$ is also completely contractive, and  extends $\phi$ by $G$-equivariance of $\phi$. 
If we take the average $\rho = \frac{1}{|G|} \sum_{g\in G} \psi_g$, we get an extension of $\phi$ to a $G$-equivariant 
complete contraction  from $W$ to $S$. Thus, $S$ is $G$-injective.

Now suppose that $S$ is $G$-injective.  By the next theorem (whose proof only uses the last paragraph)  we have $I(S) = I_G(S) = S$, so that $S$ is injective.
 \end{proof}

{\bf Remark.}   A similar result holds in the unital $G$-operator space case, with basically the same proof.

 In the last result and in Theorem \ref{mkb}  we focus on finite groups.   However we remark that the results will still be valid for continuous actions of a compact group $G$, with some modifications
 in proof.   Indeed the 
average $\rho$  in the last proof may be replaced by a Bochner integral $\rho = \int_G \,  \psi_g \, dg$ with respect to normalized Haar measure, which is easily argued to exist
and be $G$-invariant.   We leave the details to the reader.

\begin{theorem} \label{mkb}  Let $G$ be a finite group and $X$ a real  or complex $G$-operator space (resp.\ $G$-operator system). \begin{itemize}
\item [(1)]   The $G$-injective envelope of $X$ in the category of real  or complex 
 $G$-operator spaces (resp.\ $G$-operator systems, $G$-unital operator spaces) 
  is an injective envelope of $X$ in that category. 
 That is,
 $I_G(X) = I(X)$.  \item [(2)]   ${\mathcal T}_G(X)$ is a real (resp.\ complex)  ternary envelope of  $X$.  
 \item [(3)]    The $G$-$C^*$-envelope $C^*_{e,G}(X)$ is a real (resp.\ complex)  $C^*$-envelope
 $C^*_{e}(X)$ for a real (resp.\ complex)   $G$-unital operator space $X$. 
 \end{itemize} 
\end{theorem}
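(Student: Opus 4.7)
The plan is to combine the averaging argument underlying Theorem \ref{Ginj} with the $G$-rigidity of the $G$-injective envelope. Since $G$ is finite, Theorem \ref{Ginj} already tells us that $I_G(X)$ is injective in the ordinary (non-equivariant) category, so to conclude that $I_G(X)$ is a non-equivariant injective envelope of $X$ it will suffice to verify ordinary rigidity of the inclusion $X \hookrightarrow I_G(X)$. I will treat the three cases of (1) uniformly at the level of operator spaces, with the obvious replacement of ``completely contractive'' by ``UCP'' in the operator system and $G$-unital operator space cases, justified by the equivalences recalled in the introduction and by Proposition \ref{Ginjuos}.

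So fix a completely contractive (resp.\ UCP) map $u : I_G(X) \to I_G(X)$ extending the identity on the canonical copy of $X$. I would form the averaged map
$$v \; = \; \frac{1}{|G|} \sum_{g \in G} g^{-1} \circ u \circ g .$$
Each summand $g^{-1} \circ u \circ g$ is a composition involving $u$ and complete isometries (resp.\ unital complete order isomorphisms), hence is itself completely contractive (resp.\ UCP); and since the inclusion $X \hookrightarrow I_G(X)$ is $G$-equivariant, each summand restricts to $I_X$ on $X$. A routine reindexing shows $v$ is $G$-equivariant, so $v$ is a $G$-morphism extending $I_X$, and $G$-rigidity of $I_G(X)$ forces $v = I$. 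Now $CB_{\Fdb}(I_G(X), I_G(X))$ is a unital Banach algebra under composition, and as cited in the proof of Theorem \ref{ijco} the unit of any unital Banach algebra is an extreme point of its closed unit ball. The equation $I = \frac{1}{|G|} \sum_g (g^{-1} \circ u \circ g)$ expresses $I$ as an average of $|G|$ elements of this unit ball, so each summand must equal $I$; taking $g = e$ gives $u = I$. Hence $X \hookrightarrow I_G(X)$ is rigid and $I_G(X) = I(X)$.

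Parts (2) and (3) then follow immediately from (1). By definition, ${\mathcal T}_G(X)$ is the (real or complex) ternary subspace of $I_G(X)$ generated by the canonical copy of $X$, and $C^*_{e,G}(X)$ is the (real or complex) $C^*$-subalgebra of $I_G(X)$ generated by $X$, with $I_G(X)$ chosen as a unital $G$-$C^*$-algebra via Theorem \ref{ijcoG} carrying the same unit as $X$ in the $G$-unital case. Once $I_G(X)$ is identified with $I(X)$ by (1), these descriptions become precisely the defining descriptions of the ternary envelope ${\mathcal T}(X)$ and the $C^*$-envelope $C^*_e(X)$.

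I do not anticipate a serious obstacle. The most delicate point is the extreme-point step in the case when $X$ (and hence $I_G(X)$) has no unit, since there one might worry that the identity of $I_G(X)$ does not exist to serve as a ``unit vector''. But this causes no issue, because what one actually needs is extremality of the identity in the unit ball of the Banach algebra $CB_{\Fdb}(I_G(X), I_G(X))$, and this Banach algebra is always unital under composition irrespective of whether $I_G(X)$ itself has a unit.
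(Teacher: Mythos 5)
Your argument is correct, but it runs in the opposite direction from the paper's. The paper starts from the ordinary injective envelope $I(X)$: by rigidity each $g\in G$ extends uniquely to a complete isometry of $I(X)$, uniqueness of extensions forces these to compose correctly into a group action, so $I(X)$ becomes a $G$-extension of $X$; it is $G$-rigid because it is rigid, and it is $G$-injective by the averaging argument in the first paragraph of the proof of Theorem \ref{Ginj}, hence it \emph{is} the $G$-injective envelope. You instead start from $I_G(X)$ and prove ordinary rigidity, by averaging an arbitrary extension $u$ of $I_X$ into the $G$-morphism $v=\frac{1}{|G|}\sum_g g^{-1}\circ u\circ g$, using $G$-rigidity to get $v=I$, and then the extremality of the identity in the unit ball of the unital Banach algebra $CB(I_G(X))$ to force $u=I$. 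Both routes use finiteness of $G$ only through the averaging trick, applied at different points; the paper's version has the side benefit of exhibiting the $G$-action on $I(X)$ and needs no extreme-point step here, while yours is a clean ``dual'' of the arguments the paper runs in Theorems \ref{ijco} and \ref{ijcoG} and Corollary \ref{corcpxG}, and both are equally valid. One citation does need repair: you invoke Theorem \ref{Ginj} for the implication that $G$-injective implies injective, but in the paper that direction of Theorem \ref{Ginj} is itself deduced \emph{from} Theorem \ref{mkb}, so as written your appeal is circular. The fact you need is available independently: it is immediate from Lemma \ref{Hrem} (Hamana's characterization of $G$-injectivity), or from the construction below Theorem \ref{ath}, which realizes $I_G(X)$ as the range of a completely contractive projection on the injective space $l^\infty(G,B(H))$; cite one of those instead. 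Your treatment of parts (2) and (3) agrees with the paper's.
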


\begin{proof}  We just prove the operator space cases, the other being similar.
For (1) let $X$ be a real  (resp.\ complex) $G$-operator space.    By rigidity, for each $g \in G$ the action of $g$ on $X$ extends uniquely to a complete 
 isometry of $I(X)$, whose inverse is the extended action of $g^{-1}$.    
 Again by rigidity,  we have $g^{-1} (h^{-1} ((hg) x)) = x$ on $I(X)$, so that $(hg) x = h(gx)$.  
 So we have a group homomorphism into the group of surjective complete isometries on $X$.  
 So $I(X)$ is a real  (resp.\ complex)  $G$-operator space.   It is $G$-rigid since it is rigid.
 Then  $I(X)$ is $G$-injective by the first paragraph of the proof of Theorem \ref{Ginj} and hence is the $G$-injective envelope.
 
  Since ${\mathcal T}_G(X)$ (resp.\ $C^*_{e,G}(X)$) is the (real or complex) ternary subspace (resp.\ $C^*$-algebra) 
of $I_G(X) = I(X)$ generated by the copy of $X$, it  is a ternary envelope (resp.\ $C^*$-envelope) of $X$.    
   \end{proof}

{\bf Remark.}   There are analogous results to the operator algebra results early in Section 4 for $G$-operator algebras.  
We  say that an operator algebra $A$ is a $G$-{\em operator algebra} if $G\acts X$ by surjective completely  isometric
homomorphisms (automorphisms).  Again if $A$ is also unital then by a well known Banach-Stone theorem  
(e.g.\ 4.5.13 in \cite{BLM} in the complex case, which gives the real case by complexification; see also  \cite[Theorem 4.4]{RComp}), this is equivalent to $G$ acting by 
 linear surjective unital completely  isometries.

 \section{Further extension of real structure}  \label{fe}  We have seen that real structure in a complex
  operator system (resp.\ operator space) $\cS$ forces a compatible real structure in
 the $C^*$-envelope (resp.\ ternary envelope) and injective envelope.   A natural question is whether it forces real structure in the bicommutant
 of the $C^*$-envelope?   It certainly gives real structure in the bidual of the $C^*$-envelope (by Proposition 
 \ref{chco} applied to  $(\theta_{\cS})^{**}$).    
 
 We give an example showing that this is not true 
 in general.   Consider the conjugate linear period 2 involution $f^*(z) = \overline{f(\bar{z})}$ on the disk algebra and on its $C^*$-envelope $C(\Tdb)$.
 The question is if these algebras are represented nondegenerately on a Hilbert space, then does the involution extend to the bicommutant $C(\Tdb)''$ in 
 $B(H)$?   By von Neumann's bicommutant theorem this is also the weak* closure (and the strong and weak operator topology closure) of 
 the copy of $C(\Tdb)$.  
 
 Consider a countable dense set $E$ in $\Tdb$ with $E \cap \bar{E} = \emptyset$.   Note that $C(\Tdb) \subset l^\infty(E)$ isometrically
 via the canonical map $f \mapsto f_{|E}$.   This corresponds to a canonical  representation $C(\Tdb) \to B(l^2(E))$, 
 coming from the canonical representation $\lambda :  l^\infty(E)  \to B(l^2(E))$.
 We claim that $\lambda(C(\Tdb))''$ is the von Neumann algebra $\lambda(l^\infty(E))$.   Indeed $C(\Tdb)$ is weak* dense
 in $l^\infty(E)$.    To see this suppose that $g \in l^1(E)$ annihilated $C(\Tdb)$.   Fix $x \in E$ and choose a decreasing 
 sequence of positive `tent functions'
 $f_n \in C(\Tdb)$ with $f_n(x) = 1$ and $f_n = 0$ outside of an arc center $x$ of length $1/n$.   So $f_n$ converges pointwise on 
 $E$ to $\chi_{\{ x \}}$ 
 By Lebesgues theorem we have $$0 = \langle f_n , g \rangle \to \langle \chi_{\{ x \}} , g \rangle = g(x) .$$
 Thus $g = 0$ and $C(\Tdb)$ is weak* dense in  $l^\infty(E)$, and $\lambda(C(\Tdb))'' = \lambda(l^\infty(E))$ as claimed.
 
There is no  conjugate linear  $*$-automorphism of $l^\infty(E)$  extending the given involution on $C(\Tdb)$.  Indeed such a $*$-automorphism
 would be given by a period 2 permutation (bijection)
$\alpha$ of $E$.  This yields  $$\overline{f(\bar{z})} = \overline{f(\alpha(z))} , \qquad f \in C(\Tdb),$$ which 
 implies the contradiction that $\alpha(z) \in E \cap \bar{E}$.

Phrasing this in different language: let $N$ be the real $C^*$-algebra of fixed points of the given involution on $C(\Tdb)$. 
Then $C(\Tdb) = N_c$, but $(\bar{N}^{w*})_c \neq \overline{N_c}^{w*}$.  Indeed $\overline{N_c}^{w*} = l^\infty(E)$ has 
no  conjugate linear  $*$-automorphism extending the involution on $N_c = C(\Tdb)$.   In this example $N  = \{ f \in 
C(\Tdb) : f(z) = \overline{f(\bar{z})} , z \in \Tdb \}$ is real $*$-isomorphic to 
the continuous complex valued functions on the upper semicircle that are real at the endpoints $\pm1$.   
 This example also shows that one cannot extend the real structure to every injective $C^*$-algebra containing $C^*_e(A)$.  
   For $l^\infty(E)$ is an  injective $C^*$-algebra containing $C(\Tdb)$.  
   
On the other hand, note that the given involution on $C(\Tdb)$ does extend to the universal von Neumann algebra 
$C(\Tdb)^{**}$ (by e.g.\ \cite[Lemma 2.12]{BWinv}), and to $L^\infty(\Tdb)$.  The latter may be viewed as the GNS representation of the Lebesgue integral 
on $C(\Tdb)$.   

 More generally, we have:
 
  \begin{proposition} \label{tgns} Suppose that we have an involution $\dagger$ on a complex $C^*$-algebra $A$ given 
 by a conjugate linear completely isometric period 2 $*$-automorphism $\theta : A \to A$ (so that $\theta(a) = a^\dagger$, and $A$ is the complexification of a real $C^*$-algebra $B$).
 If $\tau$ is   a faithful $\dagger$-preserving state  on  $A$, 
and if $\tau$ has  GNS representation $\pi_\tau$ on $H_\tau$,  
then the involution $\theta$ extends to the von Neumann algebra   $\pi_\tau(A)''$, and indeed further to 
$B(H_\tau)$. 
 \end{proposition}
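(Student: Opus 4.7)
The plan is to build an antiunitary involution $J$ on $H_\tau$ from $\theta$ and let the desired extension be $\tilde\theta(T) := JTJ$ on $B(H_\tau)$; weak$^*$-continuity of this map will then automatically restrict it to $\pi_\tau(A)''$.

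First, on the dense image of $A$ in $H_\tau$ I define $J(\hat a) := \widehat{\theta(a)}$. The $\dagger$-preserving hypothesis, interpreted naturally as $\tau(x^\dagger) = \overline{\tau(x)}$ (which is forced by $\tau$ restricting to a real state on the real $C^*$-algebra $B$), together with the identity $\theta(b^*a) = \theta(b)^*\theta(a) = (b^\dagger)^*a^\dagger$ coming from $\theta$ being a conjugate linear $*$-automorphism, yields
\[
\langle J\hat a, J\hat b\rangle = \tau\bigl(\theta(b)^*\theta(a)\bigr) = \tau\bigl(\theta(b^*a)\bigr) = \overline{\tau(b^*a)} = \overline{\langle \hat a, \hat b\rangle}.
\]
This shows at once that $J$ respects the GNS null space and extends to a conjugate linear isometry on $H_\tau$. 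Since $\theta^2 = I$ we have $J^2 = I$, so $J$ is a surjective antiunitary involution.

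Next set $\tilde\theta(T) := JTJ$ for $T \in B(H_\tau)$. A routine verification using $\langle J\xi,\eta\rangle = \overline{\langle \xi, J\eta\rangle}$ (the conjugate linear analogue of $J^* = J$) shows that $\tilde\theta$ is conjugate linear, multiplicative, $*$-preserving, and of period $2$. To see that $\tilde\theta$ implements $\theta$ on $\pi_\tau(A)$, I would compute for $a,b \in A$
\[
\tilde\theta(\pi_\tau(a))\hat b \;=\; J\pi_\tau(a)J\hat b \;=\; J\,\widehat{a\theta(b)} \;=\; \widehat{\theta(a\theta(b))} \;=\; \widehat{\theta(a)b} \;=\; \pi_\tau(\theta(a))\hat b,
\]
using $\theta^2 = I$ and that $\theta$ is a $*$-homomorphism in the conjugate linear sense.

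Finally, $T\mapsto JTJ$ is weakly continuous (hence weak$^*$-continuous) on $B(H_\tau)$, since $\langle JTJ\xi,\eta\rangle = \overline{\langle TJ\xi, J\eta\rangle}$. It therefore carries $\pi_\tau(A)''$ into itself, and being an involution it restricts to the desired conjugate linear period $2$ $*$-automorphism of the bicommutant extending $\theta$; the further extension to $B(H_\tau)$ is $\tilde\theta$ itself. I do not anticipate a serious obstacle; the main care is tracking conjugate linearity through the $*$-adjoint and recognising that the $\dagger$-preserving condition on $\tau$ is exactly what forces $J$ to be isometric. Faithfulness of $\tau$ is not actually required for the extension, but it ensures that $\pi_\tau$ is injective so that $\pi_\tau(A)''$ genuinely contains a copy of $A$ on which $\tilde\theta$ implements $\theta$.
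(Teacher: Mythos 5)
Your proof is correct and follows essentially the same route as the paper: build the conjugation $J$ (the paper's $U$) on $H_\tau$ from $\theta$ via the GNS construction, check isometry using the $\dagger$-invariance of $\tau$, and extend by $T\mapsto JTJ$, which is weak-operator continuous and hence preserves $\pi_\tau(A)''$. If anything you are slightly more careful than the paper in tracking that $J$ is antiunitary (conjugate linear) rather than a complex linear symmetry, and in spelling out why $\tau(x^\dagger)=\overline{\tau(x)}$ is the right reading of ``$\dagger$-preserving.''
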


\begin{proof}   This may be seen  by first defining a complex linear   
involution $c_\tau$ on the GNS Hilbert space $H_\tau$ by $c_\tau (a) = \theta(a)$ for $a \in A$.
Note that $$\|  c_\tau (a) \|^2_{H_\tau} = \tau(\theta(a^*) \theta(a)) =  \tau(a^* a) = \|  a \|^2_{H_\tau}, \qquad a \in A .$$
So $c_\tau$ extends to a selfadjoint unitary (a symmetry) $U$ on $H_\tau$.
Then $T \mapsto U T U$ is a conjugate linear  weak* continuous automorphism   
on $B(H_\tau)$ which extends the canonical  involution on $\pi_\tau(A)$.   
Indeed $$U \pi_\tau(a) U (b) = U \pi_\tau(a)  \theta(b) = U a \theta(b) = \theta(a \theta(b)) = a^{\dagger} b  , \qquad a, b \in A, $$ 
So $U \pi_\tau(a) U (b) = \pi_\tau(a^{\dagger})$ since $A$ is dense in $H_\tau$.  
By continuity and density the restriction of this automorphism to $\pi_\tau(A)'' = \overline{\pi_\tau(A)}^{w*}$ is a conjugate linear completely isometric period 2 $*$-automorphism there. 
\end{proof} 

Thus if $A$ is the complexification of a real $C^*$-algebra $B$ as above then we conclude that   $\pi_\tau(B_c)''$ is the complexification of $\pi_\tau(B)''$.

 \end{document}